\documentclass[reqno]{amsart}
\usepackage{amsmath, amsthm, amssymb, dsfont}
\usepackage{geometry}
\usepackage{graphicx}
\usepackage{tikz}
\usepackage{enumerate}
\usepackage{breqn}
\usepackage{soul}


\usepackage{latexsym}
\usepackage{bbm}
\usepackage{mathtools}
\usepackage{hyperref}

%
\usepackage{comma}
\usepackage{subcaption}
\usepackage{cutwin}
\usepackage{pstricks}
\usepackage{pst-plot}
\usepackage{pgfplots}
\usepackage{float}

\numberwithin{equation}{section}

\newcommand{\defeq}{\vcentcolon=}
\newcommand{\eqdef}{=\vcentcolon}

\newcommand{\Normal}{\mathcal{N}}

\newcommand{\N}{\mathbb{N}}

\newcommand{\R}{\mathbb{R}}

\newcommand{\1}{\mathbbm{1}}
\newcommand{\comp}{\mathsf{c}}

\newcommand{\Prob}{\mathbb{P}}
\newcommand{\Var}{\mathrm{Var}}
\newcommand{\Cov}{\mathrm{Cov}}
\newcommand{\E}{\mathbb{E}}

\newcommand{\disteq}{%
  \mathrel{\vbox{\offinterlineskip\ialign{%
    \hfil##\hfil\cr
    $\scriptscriptstyle\mathrm{law}$\cr
    \noalign{\kern.15ex}
    $=$\cr
}}}}

\newcommand{\distto}{%
  \mathrel{\vbox{\offinterlineskip\ialign{%
    \hfil##\hfil\cr
    $\scriptscriptstyle\mathrm{d}$\cr
    \noalign{\kern.15ex}
    $\to$\cr
}}}}

\newcommand{\Probto}{%
  \mathrel{\vbox{\offinterlineskip\ialign{%
    \hfil##\hfil\cr
    $\scriptscriptstyle\!\Prob$\cr
    \noalign{\kern.15ex}
    $\to$\cr
}}}}


\newcommand{\br}{\mathbf{r}}
\newcommand{\bR}{\mathbf{R}}

\usepackage{changes}

\theoremstyle{plain}
\newtheorem{theorem}{Theorem}[section]
\newtheorem{lemma}[theorem]{Lemma}
\newtheorem{proposition}[theorem]{Proposition}
\newtheorem{corollary}[theorem]{Corollary}
\theoremstyle{remark}

\theoremstyle{definition}

\newtheorem{example}[theorem]{Example}

\begin{document}
\title{A fundamental problem of hypothesis testing with finite inventory in e-commerce}
\author{Dennis Bohle \and Alexander Marynych \and Matthias Meiners}

\address{Dennis Bohle, Booking.com B.V., Amsterdam, Netherlands}
	\email{dennis.bohle@booking.com}

\address{Alexander~Marynych, Faculty of Computer Science and Cybernetics, Taras Shevchenko National University of Kyiv, Ukraine}
	\email{marynych@unicyb.kiev.ua}

\address{Matthias~Meiners, Institut f\"ur Mathematik, Universit\"at Innsbruck, Austria}
	\email{matthias.meiners@uibk.ac.at}

\keywords{A/B test, conversion rate, functional limit theorem,
$2$-dimensional random walk in a semi-infinite strip.}
\subjclass[2010]{62F03, 62E20, 60F17}	

\date{\today}

\begin{abstract}
In this paper, we draw attention to a problem that is often overlooked or ignored
by companies practicing hypothesis testing (A/B testing) in online environments.
We show that conducting experiments on limited inventory that is shared between variants in the experiment
can lead to high false positive rates since the core assumption of independence between the groups is violated. We provide a detailed analysis of the problem in a simplified setting whose parameters are informed by realistic scenarios.
The setting we consider is a $2$-dimensional random walk in a semi-infinite strip. It is rich enough to take a finite inventory into account, but is at the same time simple enough to allow for a closed form of the false-positive probability.
We prove that high false-positive rates can occur, and develop tools that are suitable to help design adequate tests in follow-up work.
Our results also show that high false-negative rates may occur. The proofs rely on a functional limit theorem for the $2$-dimensional random walk in a semi-infinite strip.
\end{abstract}

\maketitle

\section{Introduction}
The golden standard for testing product changes on e-commerce websites is large scale hypothesis testing also known as A/B-Testing.

When a given version of a website is modified, it is natural to ask whether or not the modified
(new) version of the website performs better than the old one.
It is very common to use the following approach based on classic hypothesis testing:

\noindent
During a fixed time period, the so-called testing phase, whenever customers visit the website, they are displayed one of the two versions of it, where the choice which one they get to see is random. For each version of the website the owner thus collects a sample containing for each customer visiting that website relevant data such as whether or not they bought a good or how much money was spent by the customers, etc. Then a statistical test (A/B test) is applied to evaluate which version of the website performed better. 

Typically, these tests rely on the assumption of independent samples. In the present paper, we point out in a quantitative way that in the situation where there is a finite amount of a popular good the independence assumption is not feasible and can often lead to wrong conclusions. The inventory is shared between variants and if a copy of an item is sold it can not be bought by users that enter the experiment later. This implies that users are not independent both inside as well as between the variants. These dependencies could be avoided by randomly splitting on an item level instead of a user level, but this would reduce the choice of the customer and is therefore not a realistic setup.

We think it is best to illustrate the dependence problem with a ranking example that we will use throughout the paper. We limit ourselves to only two distinct products (which each should be thought of as a variety of different products  grouped into one). Using realistic parameters we show that two different ranking algorithms which perform identical if run independently show a significant difference almost 20\% of the time when run in an industry standard A/B experiment. We also show that if there is a difference in performance between the algorithms there are scenarios where the power of a standard A/B test is close to 0.

\begin{example}[Ranking experiment, take 1]	\label{Exa:ranking experiment}
We consider a ranking experiment with two types of goods, one rare good, which is very attractive (good $2$),
and a second, less attractive good (good $1$) available in practically unlimited quantities.
In applications, there may be more than two types of goods, but the less attractive ones are labeled as type-$1$ goods,
while the most attractive ones are labeled as type-$2$ goods.
Suppose that in total there are $1\,000$ goods of type $2$ and $1\,000\,000$ goods of type $1$.

A website displays the available goods to each visitor.
The goods are displayed in a certain order, which depends on the ranking algorithm used.
The owner of the website wants to compare two different algorithms.
The default algorithm, Algorithm $0$, displays the goods such that the type-$2$ goods have a low ranking and appear late in the list.
Thus, only a fraction of the visitors gets to see them.
The new algorithm, Algorithm $1$, displays the goods such that the type-$2$ goods have the highest ranking and appear first in the list.
Every visitor seeing the goods ranked by Algorithm $1$ will see both, type-$1$ and type-$2$ goods (as long as they are available).
The goal is to find out which of the two algorithms leads to a higher overall conversion rate, i.e., to a higher empirical probability to make a sale.

Suppose that during a test phase, $n=4\,000\,000$ customers visit the website.
Whenever a customer visits the website, a fair coin is tossed.
If the coin shows heads, the products are displayed ranked according to Algorithm $1$,
if the coin shows tails, the products are displayed ranked according to Algorithm $0$.

We now make the following model assumptions.
We assume that, independent of all other customers,
each customer has a chance of $20\%$ of preferring good $1$ over good $2$
and an $80\%$ chance of preferring good $2$ over good $1$.
When the goods are ranked according to Algorithm $0$,
the customer first sees type-$1$ goods.
There is a $5\%$ chance that the customer scrolls down and spots a type-$2$ good (if still available).
A customer who sees both goods and has a preference for good $2$ will buy good $2$ with $10\%$ chance
and will not buy at all with $90\%$ chance.
A customer who either sees both goods and has a preference for good $1$
or sees only good $1$ will buy good $1$ with a $5\%$ chance and will not make a buy at all with $95\%$ chance.
For simplicity, we assume that each customer buys at most one good.
\begin{figure}[h]
\begin{tikzpicture}[level distance=2cm,level 1/.style={sibling distance=7cm},
level 2/.style={sibling distance=4cm},
level 3/.style={sibling distance=2.5cm},
  every node/.style = {shape=rectangle, rounded corners,
    draw, align=center,
    top color=white, bottom color=black!10}]
  \node {Algorithm 0}
    child {node {sees only\\ type $1$}
		child{node {buys\\ type $1$} edge from parent node[left] {$0.05$}}
    		child{node {no buy} edge from parent node[right] {$0.95$}}
    edge from parent node[left] {$0.95$} }
        child {node {sees\\ types $1\&2$}
        	child{node {prefers\\ type $1$}
    		child{node {buys\\ type $1$} edge from parent node[left] {$0.05$}}
        		child{node {no buy} edge from parent node[right] {$0.95$}}
    	edge from parent node[left] {$0.20$}}
        	child{node {prefers\\ type $2$}
    		child{node {buys\\ type $2$} edge from parent node[left] {$0.10$}}
        		child{node {no buy} edge from parent node[right] {$0.90$}}
        edge from parent node[right] {$0.80$}}
    edge from parent node[right] {$0.05$}};
\end{tikzpicture}
	\caption{Ranking experiment: Algorithm 0}	\label{fig:Algo 0}
\end{figure}
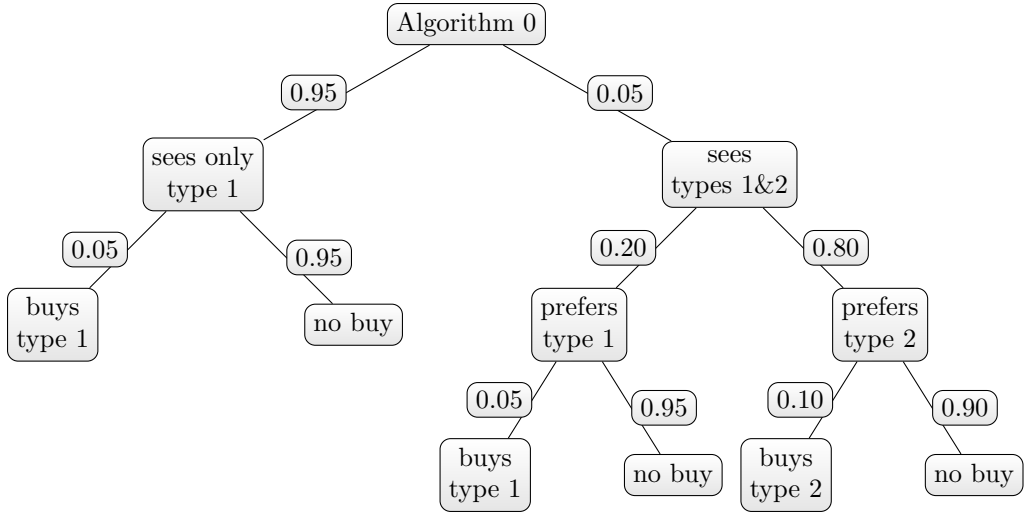
\begin{figure}[h]
\begin{tikzpicture}[level distance=1.7cm,level 1/.style={sibling distance=7cm},
level 2/.style={sibling distance=5cm},
level 3/.style={sibling distance=3.2cm},
  every node/.style = {shape=rectangle, rounded corners,
    draw, align=center,
    top color=white, bottom color=black!10}]
  \node {Algorithm 1}
        child {node {sees\\ types $1\&2$}
        	child{node {prefers\\ type $1$}
    		child{node {buys\\ type $1$} edge from parent node[left] {$0.05$}}
        		child{node {no buy} edge from parent node[right] {$0.95$}}
    	edge from parent node[left] {$0.20$}}
        	child{node {prefers\\ type $2$}
    		child{node {buys\\ type $2$} edge from parent node[left] {$0.10$}}
        		child{node {no buy} edge from parent node[right] {$0.90$}}
        edge from parent node[right] {$0.80$}}
edge from parent node[right] {$1.0$}};
\end{tikzpicture}
	\caption{Ranking experiment: Algorithm 1}	\label{fig:Algo 1}
\end{figure}
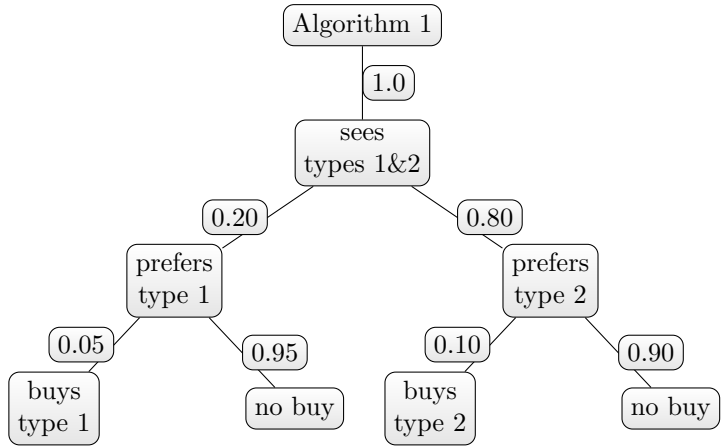

The data collected is a sample $(x_1,y_1,i_1),\ldots,(x_n,y_n,i_n)$ where $n$ is the sample size,
i.e., the number of customers visiting the website during a certain test period.
Here, $i_k$ is either $0$ or $1$, depending on whether Algorithm $0$ or $1$ was used to display the goods to the $k^{\rm th}$ customer.
Further, $x_k=1$ or $x_k=0$ depending on whether good 1 was bought or not and, analogously,
$y_k=1$ or $y_k=0$ depending on whether good 2 was bought or not.
Notice that by our assumption that each customer buys at most one good, we have $x_k+y_k \in \{0,1\}$.
Those $(x_k,y_k,i_k)$ with $i_k=0$ are assigned to sample $0$ and the others to sample $1$.
We write $n_0 \defeq \sum_{k=1}^n (1-i_k)$ and $n_1 \defeq \sum_{k=1}^n i_k$
for the corresponding sample sizes.
The numbers of sales in each group are $\ell_0 = \sum_{k=1}^n (x_k+y_k) (1-i_k)$
and $\ell_1 = \sum_{k=1}^n (x_k+y_k) i_k$,
the total number of sales is $\ell = \ell_0 + \ell_1$.
The empirical probabilities for sales in samples $0$ and $1$ are
\begin{equation*}
p_0 \defeq \frac1{n_0} \sum_{k=1}^n (x_k+y_k) (1-i_k) = \frac{\ell_0}{n_0}
\quad	\text{and}	\quad
p_1 \defeq \frac1{n_1} \sum_{k=1}^n (x_k+y_k) i_k  = \frac{\ell_1}{n_1}.
\end{equation*}
The website owner wants to find out whether Algorithm $1$ performs better than Algorithm $0$.
\begin{figure}[h]
\begin{subfigure}[t]{0.45\textwidth}
\begin{center}
\includegraphics[width=1\textwidth]{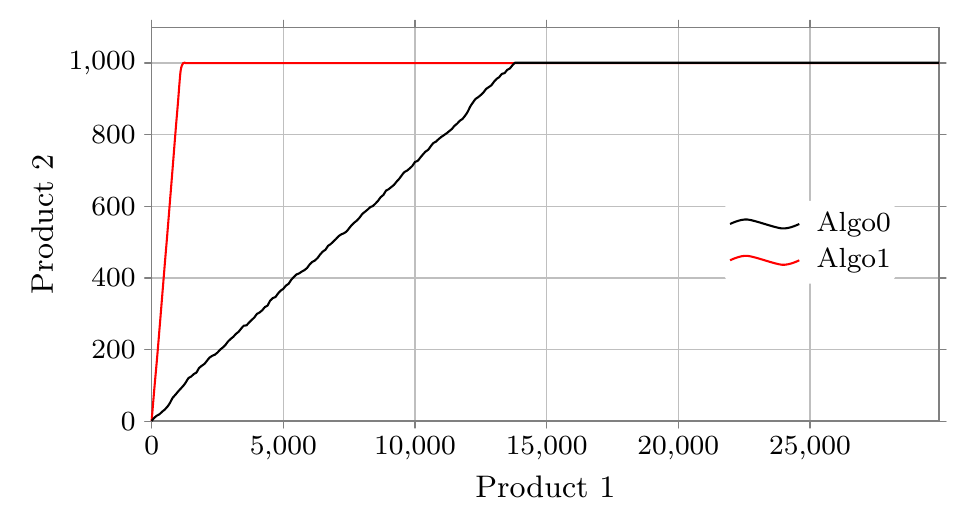}
\end{center}
\subcaption{Simulation of sales for Algorithms $0$ and $1$ run on separate inventories.
Both algorithms sell all 1\,000 attractive goods.
Algorithm $0$ additionally sells $199\,528$ goods of type $1$, Algorithm $1$ sells $199\,325$.
The differences between the two algorithms are within the fluctuations one expects.
Surely, this simulation does not give rise to the conclusion that Algorithm $1$ outperforms Algorithm $0$.
However, Algorithm $1$ sells the attractive goods earlier.}
\end{subfigure}
\hfill
\begin{subfigure}[t]{0.45\textwidth}
\begin{center}
\includegraphics[width=1\textwidth]{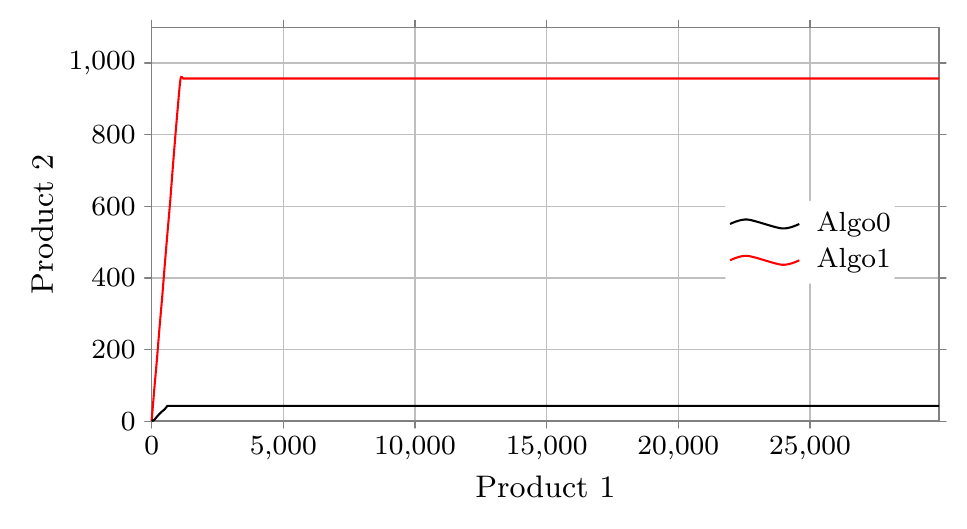}
\end{center}
\subcaption{Simulation of sales for Algorithms $0$ and $1$ run on shared inventory.
Algorithm $0$ sells $99\,224$ items of good $1$ and $43$ items of good $2$,
while Algorithm $1$ sells $99\,229$ items of product $1$ and the remaining $957$ items of product $2$.
The huge difference in sales of the attractive good
leads to a rejection of the null hypothesis (of both algorithms performing equally well) by the chi-squared test,
the $p$-value in this simulation is $0.0369\ldots < 0.05$.}
\end{subfigure}
\caption{\small Simulation of sales in Example \ref{Exa:ranking experiment}
on separate and shared inventory.}
\end{figure}

It is a common approach to test for the higher probability of a sale by assuming an independent sample and using a $G$-test
or the asymptotically equivalent two-sample chi-squared test.
The hypothesis is that the conversion rates are identical in both samples.
For simplicity, in the paper at hand, we shall always consider the chi-squared test.
The test statistics for the latter is
\begin{equation*}
\chi^2 = \sum_{i=0,1} \frac{(\ell_i-\ell\frac{n_i}{n})^2}{\ell \frac{n_i}{n}}
+ \sum_{i=0,1} \frac{(n_i\!-\!\ell_i-\ell\frac{n_i}{n})^2}{(n-\ell) \frac{n_i}{n}}.
\end{equation*}
The hypothesis is rejected if $\chi^2 > q_{1-\alpha}$ where $\alpha \in (0,1)$ is the significance level
and $q_{1-\alpha}$ is the $(1\!-\!\alpha)$-quantile of the chi-squared distribution with one degree of freedom,
see \cite[Chapter 17]{vdVaart:1998}.

Throughout the paper, we shall return repeatedly to Example \ref{Exa:ranking experiment}
and discuss it in the light of our findings.
\end{example}

We shall discuss a variant of this example later on showing that ignoring the dependencies might also lead
to too high false negative rates, see Example \ref{Exa:picky customers} below.

\section{Model assumptions}	\label{sec:model}

We return to the general situation,
in which a website offers two types of goods, good $1$ and good $2$.
During a test phase, in which a new website design is used in parallel, the website has $n$ visitors.
Suppose that the website has a practically unlimited supply of items of good $1$,
while there are only $c_n \in \{1,2,3,\ldots\} \eqdef \N$ items of good $2$.
Typically, $n$ will be very large and $c_n$ will also be large, but significantly smaller than $n$.
Whenever a user visits the website, a coin
with success probability $p$ is tossed.
If the coin shows heads, the new website design is displayed, whereas if the coin shows tails,
the old design is displayed.
We thus observe a sample $((x_1,y_1,i_1),\ldots,(x_n,y_n,i_n)) \in(\N_0^2 \times \{0,1\})^n$ where $\N_0 \defeq \N \cup \{0\}$.
Here,
$x_k$ and $y_k$ are the numbers of goods of type $1$ and $2$, respectively, that have been bought by the $k^{\rm th}$ visitor of the website during the test phase,
while $i_k = 1$ if the new design has been displayed to the $k^{\rm th}$ visitor, and $i_k = 0$, otherwise.
We consider $((x_1,y_1,i_1),\ldots,(x_n,y_n,i_n))$ as the realization of a random vector
$((X_1,Y_1,I_1),\ldots,(X_n,Y_n,I_n))$. We define $Z_k \defeq \1_{\{X_k+Y_k > 0\}}$ to be the indicator
of the event that the $k^{\rm th}$ customer bought something.
Further, we set $\br_k \defeq (s_k,t_k) \defeq (x_1,y_1)+\ldots+(x_k,y_k)$ and
$\bR_k \defeq (S_k,T_k) \defeq (X_1,Y_1)+\ldots+(X_k,Y_k)$
for $k=0,\ldots,n$ where the empty sum is defined to be the zero vector.

\subsection{The classical model assuming independence}

\begin{sloppypar}
Many website owners in e-commerce use the $G$-test or the chi-squared test in the given situation.
This test only uses the information whether or not a good was purchased,
that is, the only information from the sample
$((X_1,Y_1,I_1),\ldots,(X_n,Y_n,I_n))$
used by the test is $(Z_1,I_1),\ldots,(Z_n,I_n)$.
This amounts to the following model assumptions.
\end{sloppypar}

\begin{itemize}\itemsep1pt
	\item[($\chi1$)]
		There is a sequence $(I_1,I_2,\ldots)$ of i.i.d.\ copies of a Bernoulli variable $I$
		with $\Prob(I=1)=p=1-\Prob(I=0) \in (0,1)$.
	\item[($\chi2$)]
		There are a random variable $\zeta$ and $p_0,p_1 \in (0,1)$
		such that
		\begin{equation*}
		\Prob(Z_k \in \cdot | I_k = i) = \Prob(\zeta \in \cdot | I=i) = \mathrm{Ber}(p_i)(\cdot) = p_i \delta_1(\cdot) + (1-p_i) \delta_0(\cdot)
		\end{equation*}
		for all $k \in \N$ and $i=0,1$,
	\item[($\chi3$)]
		The family $((I_k,Z_k))_{k \in \N}$ is independent.
\end{itemize}
Here, and throughout the paper, for $x \in \R^d$, we write $\delta_x$ for the Dirac measure with a point at $x$.
Assumptions ($\chi1$) through ($\chi3$) possess the following interpretations.

\noindent
($\chi1$):
The random variable $I_k$ models the coin toss that is used to decide whether the new or the old website design is displayed to the $k^{\rm th}$ visitor of the website during the test phase.

\noindent
($\chi2$):
The random variable $Z_k$ is the indicator of the event that the $k^{\rm th}$ visitor bought something.

\noindent
($\chi3$):
The independence assumption in the context of the low inventory problem is made for simplicity.
We question the feasibility of this assumption in the present paper.

\subsection{A model incorporating low inventory of a popular good.}

We propose a simple model in which we keep track of the inventory of a rare good.
Throughout the paper, we shall refer to this model as the `model incorporating low inventory'.
By $c_n \in \N$ we denote the quantity at which the rare good is available.
The most important case we consider is where $c_n$ is asymptotically equivalent to a constant times $\sqrt{n}$.
However, as we need this assumption only occasionally,
throughout the paper, we only assume that $(c_n)_{n\in\N}$ is a non-decreasing unbounded sequence of integers
which is regularly varying with index $\rho\in(0,1]$ at infinity\footnote{see \cite{Bingham+Goldie+Teugels:1987} for a standard textbook reference}, that is,
\begin{equation}	\label{eq:c_n reg var}
\lim_{n\to\infty}\frac{c_{\lfloor nt\rfloor}}{c_n}=t^{\rho},	\quad	t \geq 0
\end{equation}
and further $c_n = O(n)$ as $n\to\infty$, which is relevant only if $\rho = 1$.
Notice that the case $c_n \sim \mathrm{const} \cdot \sqrt{n}$ is covered.
Indeed, in this case, we have $\rho=\frac12$.

In the next step, we informally describe the evolution of the process $(\bR_k)_{k \in \N_0}$.
Let $C_n \defeq \N_0 \times ([0,c_n] \cap \N_0)$,
$C_n^\circ \defeq \N_0 \times ([0,c_n) \cap \N_0)$
and $\partial C_n \defeq C_n \setminus C_n^\circ = \N \times \{c_n\}$.
At each step, a coin with success probability $p$ is tossed.
Depending on whether the coin shows heads or tails, the walk attempts to make one step
according to a probability distribution $\mu_1$ or $\mu_0$, respectively, on $\N_0^2$.
The step is actually performed if the walk stays in the strip $C_n$.
Otherwise, another independent coin with success probability $q$ is tossed.
If the second coin shows heads, the walk moves in each coordinate direction according to the attempted step as far as possible
but stops at the boundary of $C_n$. If the second coin shows tails, the walk stays put.
Once the walk is on the boundary of $C_n$, it moves there according to a one-dimensional random walk in horizontal direction.\smallskip

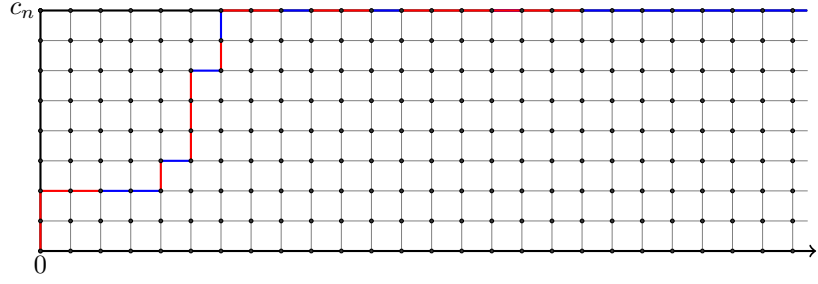
\begin{figure}[h]
\begin{center}
\begin{tikzpicture}[thin, scale=0.4,-,
                   shorten >=0pt+0.5*\pgflinewidth,
                   shorten <=0pt+0.5*\pgflinewidth,
                   every node/.style={circle,
                                      draw,
                                      fill          = black!80,
                                      inner sep     = 0pt,
                                      minimum width = 1.4 pt}]

\draw [help lines] (0,0) grid (25.5,8);
\draw [thick,->] (0,0) -- (25.8,0);
\draw [thick,-] (0,0) -- (0,8);
\draw [thick,-] (0,8) -- (25.5,8);
    \node[label=below:$0$] at (0,0) {};
    \node[label=left:$c_n$] at (0,8) {};
    
\draw [thick,-,color=red] (0,0) -- (0,1);
\draw [thick,-,color=red] (0,1) -- (0,2);
\draw [thick,-,color=red] (0,2) -- (1,2);
\draw [thick,-,color=red] (1,2) -- (2,2);
\draw [thick,-,color=blue] (2,2) -- (3,2);
\draw [thick,-,color=blue] (3,2) -- (4,2);
\draw [thick,-,color=red] (4,2) -- (4,3);
\draw [thick,-,color=blue] (4,3) -- (5,3);
\draw [thick,-,color=red] (5,3) -- (5,4);
\draw [thick,-,color=red] (5,4) -- (5,5);
\draw [thick,-,color=red] (5,5) -- (5,6);
\draw [thick,-,color=blue] (5,6) -- (6,6);
\draw [thick,-,color=red] (6,6) -- (6,7);
\draw [thick,-,color=blue] (6,7) -- (6,8);
\draw [thick,-,color=red] (6,8) -- (7,8);
\draw [thick,-,color=red] (7,8) -- (8,8);
\draw [thick,-,color=blue] (8,8) -- (9,8);
\draw [thick,-,color=red] (9,8) -- (10,8);
\draw [thick,-,color=red] (10,8) -- (11,8);
\draw [thick,-,color=blue] (11,8) -- (12,8);
\draw [thick,-,color=red] (12,8) -- (13,8);
\draw [thick,-,color=red] (13,8) -- (14,8);
\draw [thick,-,color=red] (14,8) -- (15,8);
\draw [thick,-,color=red] (15,8) -- (16,8);
\draw [thick,-,color=blue] (15,8) -- (16,8);
\draw [thick,-,color=red] (15,8) -- (16,8);
\draw [thick,-,color=red] (16,8) -- (17,8);
\draw [thick,-,color=red] (17,8) -- (18,8);
\draw [thick,-,color=blue] (18,8) -- (19,8);
\draw [thick,-,color=blue] (19,8) -- (20,8);
\draw [thick,-,color=blue] (20,8) -- (21,8);
\draw [thick,-,color=blue] (21,8) -- (22,8);
\draw [thick,-,color=blue] (22,8) -- (23,8);
\draw [thick,-,color=blue] (23,8) -- (24,8);
\draw [thick,-,color=blue] (24,8) -- (25,8);
\draw [thick,-,color=blue] (25,8) -- (25.5,8);

\foreach \x in {0,1,2,3,4,5,6,7,8,9,10,11,12,13,14,15,16,17,18,19,20,21,22,23,24,25}
\foreach \y in {0,1,2,3,4,5,6,7,8}
    \node at (\x,\y) {};

%
\end{tikzpicture}
\end{center}
\caption{The $2$-dimensional random walk in a semi-infinite strip. The $k^{\rm th}$ step is drawn in red if $I_k=1$ and it is drawn in blue, otherwise.
The walk moves in the strip until it hits the upper boundary
and continues on the boundary as a $1$-dimensional random walk.
Here, we use the particular model from Example \ref{Exa:ranking experiment}.
Notice that the majority of steps have length zero and are not displayed.}
\end{figure}
\smallskip

The underlying model assumptions are the following.
\begin{itemize}\itemsep1pt
	\item[(A1)]
		There are two sequences $(I_1,I_2,\ldots)$
		and $(J_1,J_2,\ldots)$ of i.i.d.\ copies of Bernoulli variables $I$ and $J$, respectively,
		with $\Prob(I=1)=p=1-\Prob(I=0) \in (0,1)$ and $\Prob(J=1)=q=1-\Prob(J=0) \in [0,1]$.
	\item[(A2)]
		There are a sequence
		$(\xi_1,\eta_1),(\xi_2,\eta_2),\ldots$ of i.i.d.\ copies of 		a random variable $(\xi,\eta)$
		and two probability measures $\mu_0$, $\mu_1$ on $\N_0^2$
		satisfying $\mu_i(\{(a,b)\}) > 0$
		for all $(a,b) \in \{0,1\}^2$ and $i=0,1$
		such that
		\begin{equation*}
		\Prob((\xi_k,\eta_k) \in \cdot | I_k = i)
		= \Prob((\xi,\eta) \in \cdot | I=i) = \mu_i(\cdot)
		\end{equation*}
		for all $k \in \N$ and $i=0,1$.
		We set $\mu_p(\cdot) \defeq p \mu_1(\cdot) + (1-p) \mu_0(\cdot)
		= \Prob((\xi,\eta) \in \cdot)$.
	\item[(A3)]
		There are a probability measure $\nu$ on $\N_0$
		and i.i.d.\ copies $\theta_1,\theta_2,\ldots$
		of a random variable $\theta$ with $\Prob(\theta \in \cdot) = \nu(\cdot)$.
	\item[(A4)]
		The sequences $(I_k)_{k \in \N}$, $(J_k)_{k \in \N}$, $((\xi_k,\eta_k))_{k \in \N}$
		and $(\theta_k)_{k \in \N}$ are independent; all random variables have finite second moments.
	\item[(A5)]
		Let $k \in \N$. If $\bR_{k-1} = (S_{k-1},T_{k-1}) \in C_n^\circ$, then
		\begin{equation*}
		\bR_k =	\begin{cases}
					\bR_{k-1} + (\xi_k,\eta_k)					&	\text{if }	T_{k-1}+\eta_k \leq c_n,	\\
					(S_{k},(T_{k-1} + \eta_k) \wedge c_n)	&	\text{if }	T_{k-1}+\eta_k > c_n	\text{ and } J_k = 1,	\\
					\bR_{k-1}								&	\text{if }	T_{k-1}+\eta_k > c_n	\text{ and } J_k = 0.
					\end{cases}
		\end{equation*}
		On the other hand, if $\bR_{k-1} \in \partial C_n$, then $T_{k-1} = c_n$.
		In this case,
		\begin{equation*}
		\bR_k = \bR_{k-1} + (\theta_k,0).
		\end{equation*}
		
		\noindent
		Finally, define $(X_k,Y_k) \defeq \bR_k-\bR_{k-1}$.
\end{itemize}
The interpretations of these assumptions are the following.

\noindent
(A1):
The random variable $I_k$ models the coin toss that is used to decide whether the new or the old website design is displayed to the $k^{\rm th}$ visitor of the website during the test phase.
The random variable $J_k$ models the preference of the $k^{\rm th}$ visitor.
If $J_k=1$, then the user must buy. Users with $J_k=0$ only buy when they get exactly what they want in the first place.

\noindent
(A2):
The random variable $(\xi_k,\eta_k)$ can be interpreted as the vector of goods that the $k^{\rm th}$ visitor would buy when visiting the (displayed version of the) website
if there was enough supply of these goods.

\noindent
(A3):
The random variable $\theta_k$ can be interpreted as the amount of type-$1$ goods that the $k^{\rm th}$ visitor would buy
when visiting the website and finding only goods of type $1$ left.

\noindent
(A4):
This is an independence assumption which is made to keep the model as simple as possible.

\noindent
(A5):
The random variable $(X_k,Y_k)$ models what is actually bought by the $k^{\rm th}$ user.
This depends on the needs of the user, $\xi_k$, $\eta_k$ and $\theta_k$, the remaining amount of the rare good $2$ given by $c_n-T_{k-1}$,
and the user's preference $J_k$.
If there are enough goods available to meet the needs of the $k^{\rm th}$ user,
then the user will buy exactly the needed amounts,
namely, $\xi_k$ of good $1$ and $\eta_k$ of good $2$.
If good $2$ is not available at a sufficient quantity, then the user will either buy as much as possible of each of the goods if $J_k=1$
or nothing at all if $J_k=0$. If there is nothing left of good $2$, the user will buy $\theta_k$ of good $1$.\footnote{
Notice that according to our model, the two versions of the website have an identical effect on the user
once the popular good is sold out. This is a simplifying assumption which excludes situations
where for instance the effect of a new banner on the website is investigated.
These situations can sometimes be analyzed via classical tests.
In any case, we point out that our proofs could be easily modified to deal with the situation
where the law of $\theta_k$ depends on the value of $I_k$,
but then the results become even more cumbersome.
}

Notice that in both models, $\Prob$ depends on $p$, which is not explicit in the notation.
While in large parts of the paper, $p$ is fixed, in some places, however, it is important to make the dependence
of $\Prob$ on $p$ explicit. In these places, we write $\Prob_p$. Often, this will be $\Prob_0$ or $\Prob_1$,
which correspond to the situations where only one version of the website is used.

Let us introduce some notation for various characteristics of the above variables which we shall  use throughout the paper.
\begin{itemize}
	\item
		We set ${\bf m}_0\defeq (m^{\xi}_0,m^{\eta}_0) \defeq \E[(\xi,\eta) | I=0]$,
		${\bf m}_1 \defeq (m^{\xi}_1,m^{\eta}_1) \defeq \E[(\xi,\eta) | I=1]$
		and ${\bf m} \defeq (m^{\xi},m^{\eta}) \defeq \E[(\xi,\eta)] = p{\bf m}_1+(1-p){\bf m}_0$.
		Notice that $m^{\xi}$ and $m^{\eta}$ depend on $p$ even though this is not explicit in the notation.
	\item
		The covariance matrices of the probability measures $\mu_i$, $i=0,1$
		are denoted by ${\bf C}_i$, $i=0,1$, respectively.
		The covariance matrix of the probability measure $\mu_p$ is then
		\begin{equation*}
		{\bf C} = p{\bf C}_1+(1-p){\bf C}_0
		=\left(\begin{matrix}\sigma_\xi^2 & \rho_{\xi\eta}	\\ \rho_{\xi\eta} & \sigma^2_{\eta}\end{matrix}\right).
		\end{equation*}
	\item
		We denote by $m^{\theta}=\E[\theta]$ and $\sigma^2_{\theta}= \Var[\theta]$,
		the mean and the variance of the probability measure $\nu$.
	\item
		Finally,
		we set $p_i \defeq \mu_i(\{(0,0)^\comp\}) = \Prob(\xi+\eta > 0 | I = i)$
		for $i=0,1$
		and $p_\theta \defeq \Prob(\theta>0)$.
		The $p_i$, $i=0,1$ and $p_\theta$
		are the theoretical conversion rates.
\end{itemize}

\begin{example}[Ranking experiment, take 2]	\label{Exa:ranking experiment as special case of our model}
We return to Example \ref{Exa:ranking experiment} and briefly explain how this example fits into the framework
of the above model.
The number of visitors during the test phase is $n=4 \cdot 10^6$.
The quantity of the attractive good $2$ is $c_n = 1\,000 = \frac12 \cdot \sqrt{n}$.
Website visitors view each version of the website with equal probability,
so $I_1,I_2,\ldots$ have success probability $p = \frac12$.

Further, as can be readily seen from Figure \ref{fig:Algo 0},
\begin{align*}	\textstyle
\mu_0 = \frac{96}{100} (\frac1{20} \delta_{(1,0)} + \frac{19}{20} \delta_{(0,0)})
+ \frac{4}{100} (\frac1{10} \delta_{(0,1)} + \frac{9}{10} \delta_{(0,0)})
= \frac{948}{1\,000} \delta_{(0,0)} + \frac{4}{1\,000} \delta_{(0,1)} + \frac{48}{1\,000} \delta_{(1,0)}.
\end{align*}
Analogously, from Figure \ref{fig:Algo 1}, we deduce
\begin{align*}	\textstyle
\mu_1 = \frac{1}{5} (\frac1{20} \delta_{(1,0)} + \frac{19}{20} \delta_{(0,0)})
+ \frac45 (\frac1{10} \delta_{(0,1)} + \frac{9}{10} \delta_{(0,0)})
= \frac{91}{100} \delta_{(0,0)} + \frac8{100} \delta_{(0,1)} + \frac1{100} \delta_{(1,0)}.
\end{align*}
Moreover, the law of $\theta$ is given by $\frac{19}{20} \delta_{(0,0)}+\frac1{20} \delta_{(1,0)}$.
The variables $J_k$ are irrelevant in the given situation as step sizes here are at most one,
hence there will never be the situation where a visitor attempts to buy more of the popular good
than what is left.
We conclude that the theoretical conversion rates are given by
\begin{equation*}	\textstyle
p_0 = \mu_0(\{(0,0)\}^\comp) = \frac{52}{1\,000},
\quad
p_1 = \mu_1(\{(0,0)\}^\comp) = \frac{9}{100}
\quad	\text{and}	\quad
p_\theta = \frac{5}{100}.
\end{equation*}
We shall see that if $c_n = c \sqrt{n}$, then the chi-squared test will reject the hypothesis with probability tending to $1$
as $c$ becomes large.
On the other hand, we shall demonstrate that Algorithm $2$ does not perform better
given the model assumptions (A1) through (A5).
\end{example}

\section{Testing for the higher conversion rate}

We address the question which algorithm, when used alone, leads to the higher conversion rate,
where the conversion rate is the total number of sales divided by the total number of visitors.
More formally, for $i=0,1$, we define
\begin{align*}
N_n^{(i)}	\defeq \sum_{k=1}^n \1_{\{I_k=i\}}
\quad	\text{and}	\quad
L_n^{(i)}	\defeq \sum_{k=1}^n \1_{\{Z_k > 0,\, I_k=i\}},
\end{align*}
which model the number of visitors of website version $i$
and the number of those visitors who make a purchase.
We set $L_n \defeq L_n^{(0)}+L_n^{(1)}$, which is the total number of purchases,
and notice that $N_n^{(0)} + N_n^{(1)} = n$.
Then
\begin{equation*}
C_n^{(i)} \defeq \tfrac{L_n^{(i)}}{N_n^{(i)}}
\end{equation*}
is the empirical conversion rate in group $i$.
We stipulate that $C_n^{(i)} \defeq 0$ on $\{N_n^{(i)}=0\}$.

If one chooses $p \in \{0,1\}$, then $C_n^{(p)}$ under $\Prob_p$ is the empirical conversion rate
when only website $i=p$ is used.
In view of this, version $1$ of the website is better than version $0$
if $C_n^{(1)}$ under $\Prob_1$ is `larger' than $C_n^{(0)}$ under $\Prob_0$.
Here, the term `larger' is not specified a priori, so we need to clarify what we mean by this.

\subsection{The chi-squared test statistics in the classical model}	\label{subsec:limiting law classical}

In the classical model, i.e., if assumptions ($\chi1$), ($\chi2$) and ($\chi3$) are in force,
$Z_1, Z_2, \ldots$ are i.i.d.\ 
with expectation $\E[Z_1] = \Prob(Z_1 = 1) = pp_1+(1-p)p_0$.
Hence, by the strong law of large numbers, for $p \in \{0,1\}$,
\begin{equation*}
c^{(p)} \defeq \lim_{n \to \infty} C_n^{(p)} = \lim_{n \to \infty} \tfrac{1}{n}L_n^{(p)}
= \Prob(\xi+\eta > 0) = p p_1 + (1-p) p_0	\quad	\text{a.\,s.}
\end{equation*}
Consequently, testing whether $C_n^{(1)}$ under $\Prob_1$ is `different' from $C_n^{(0)}$ under $\Prob_0$
can be formulated as follows:
\begin{equation*}
H_0: p_1 = p_0	\qquad	\text{vs.}	 \qquad	H_1: p_1 \not= p_0.
\end{equation*}
More interest, in fact, would be in the corresponding one-sided test problem.
Hence, in this case, it is a classical test problem and widely used tests for this problem are
the chi-squared test, the $G$-test, and Fisher's exact test.
To keep the paper short, we shall always restrict attention to the chi-squared test.
For the reader's convenience, we recall some facts about this test.

\begin{table}[htp]
\caption{Contingency table}
\begin{center}
\begin{tabular}{|c|c|c|c|}
\hline
		&	purchase				&	no purchase			&	$\sum$		\\	\hline
group 0	&	$L_n^{(0)}	$			&	$N_n^{(0)}-L_n^{(0)}$	&	$N_n^{(0)}$	\\	\hline
group 1	&	$L_n^{(1)}	$			&	$N_n^{(1)}-L_n^{(1)}$	&	$N_n^{(1)}$	\\	\hline
$\sum$	&	$L_n$				&	$n-L_n$				&	$n$			\\	\hline
\end{tabular}
\end{center}
\label{default}
\end{table}%

\noindent
The test statistics for the chi-squared test is
\begin{equation}\label{eq:chi_square}
\chi^2	\defeq	 \sum_{i=0,1} \frac{\big(L_n^{(i)} - L_n \frac{N_n^{(i)}}{n}\big)^2}{L_n \frac{N_n^{(i)}}{n}}
+ \sum_{i=0,1} \frac{\big(N_n^{(i)}\!-\!L_n^{(i)}-(n\!-\!L_n)\frac{N_n^{(i)}}{n}\big)^2}{(n\!-\!L_n) \frac{N_n^{(i)}}{n}}.
\end{equation}
If ($\chi$1) through ($\chi3$) are in force, as $n \to \infty$, the distribution of $\chi^2$
approaches a chi-squared distribution with $1$ degree of freedom \cite[Chapter 17]{vdVaart:1998}.
Write $q_{1-\alpha}$ for the $1-\alpha$ quantile of the chi-squared distribution with $1$ degree of freedom.
Then, with significance level of $\alpha$, the hypothesis is rejected if $\chi^2 > q_{1-\alpha}$.

\subsection{The limiting law of the chi-squared test statistics in the model incorporating low inventory}	\label{subsec:limiting law}

Now suppose that there is a rare but popular good, i.e., suppose that
the model assumptions (A1) through (A5) hold.
One goal of this paper is to point out in a quantitative way that when (A1) through (A5) instead of ($\chi1$) through ($\chi3$) are in force,
then the chi-squared test may produce too many false positives.
In other words, it may fail to hold the specified significance level.
This is because the distribution of $\chi^2$ under the null hypothesis
is different when (A1) through (A5) rather than ($\chi1$) through ($\chi3$) are in force.
The detailed statement is given in the following theorem.

\begin{theorem}	\label{Thm:cvgce chi^2 statistics}
Suppose that (A1) through (A5) and \eqref{eq:c_n reg var} are in force.
Assume additionally that
\begin{equation*}
d_\infty \defeq \lim_{n\to\infty}\frac{c_n}{\sqrt{n}}\in [0,\infty).
\end{equation*}
Then the chi-squared statistics defined by \eqref{eq:chi_square} satisfies
\begin{align*}	\textstyle
\chi^2
&\distto
\left(\Normal-\frac{d_\infty(p_0-p_1)\sqrt{p(1-p)}}{m^{\eta}\sqrt{p_\theta(1-p_\theta)}}\right)^{\!\! 2}
\qquad	\text{as }	n \to \infty
\end{align*}
where $\Normal$ is a standard normal random variable.
\end{theorem}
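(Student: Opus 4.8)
\section*{Proof proposal}

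The plan is to first collapse the vector-valued statistic into a single scalar one and then analyse that scalar through the sell-out time of the rare good. Recall the elementary algebraic identity for a $2\times2$ contingency table: the Pearson statistic \eqref{eq:chi_square} equals $W_n^2$, where
\[
W_n \defeq \frac{C_n^{(1)} - C_n^{(0)}}{\sqrt{\tfrac{L_n}{n}\bigl(1-\tfrac{L_n}{n}\bigr)\bigl(\tfrac{1}{N_n^{(0)}} + \tfrac{1}{N_n^{(1)}}\bigr)}}
\]
is the studentised difference of the empirical conversion rates (one checks directly that $L_n^{(0)}N_n^{(1)}-N_n^{(0)}L_n^{(1)}=-N_n^{(0)}N_n^{(1)}(C_n^{(1)}-C_n^{(0)})$, which reduces \eqref{eq:chi_square} to $W_n^2$). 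By the continuous mapping theorem it then suffices to show that $W_n \distto \Normal - \tfrac{d_\infty(p_0-p_1)\sqrt{p(1-p)}}{m^{\eta}\sqrt{p_\theta(1-p_\theta)}}$, the sign inside the standard normal being immaterial after squaring.

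The denominator is routine. Under (A1)--(A5) the overwhelming majority of visitors arrive after the rare good is exhausted and then convert at the common rate $p_\theta$, so that $\tfrac{L_n}{n}\Probto p_\theta$, while $\tfrac{N_n^{(1)}}{n}\Probto p$ and $\tfrac{N_n^{(0)}}{n}\Probto 1-p$; hence $\sqrt n$ times the denominator converges in probability to $\sqrt{p_\theta(1-p_\theta)/(p(1-p))}$. The crux is therefore the numerator $\sqrt n\,(C_n^{(1)}-C_n^{(0)})$. To analyse it I introduce the sell-out time $\tau_n\defeq\inf\{k:T_k=c_n\}$ and split each $L_n^{(i)}$ into the contributions of visitors $k\le\tau_n$ and $k>\tau_n$. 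Because the walk stays on $\partial C_n$ once it arrives there, and by (A5) its increments are then governed by $\theta_k$ alone, every post–sell-out visitor converts with probability $p_\theta$ independently of the group label $I_k$. The structural input from the functional limit theorem is that $T_k$ grows linearly with slope $m^{\eta}$, whence $\tau_n/\sqrt n\Probto d_\infty/m^{\eta}$; in particular $\tau_n=O(\sqrt n)$.

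The post–sell-out block therefore consists of $\approx n$ i.i.d.\ Bernoulli($p_\theta$) conversions split into the two groups, and supplies precisely the classical Gaussian fluctuation with asymptotic variance $p_\theta(1-p_\theta)/(p(1-p))$. The pre–sell-out block consists of only $O(\sqrt n)$ visitors, among whom those in group $i$ convert at rate $p_i=\mu_i(\{(0,0)\}^{\comp})$ (the finitely many visitors affected by the boundary overshoot are negligible). Carrying out the bookkeeping $L_n^{(i)}\approx p_\theta N_n^{(i)}+(p_i-p_\theta)\,(\text{group-}i\text{ visitors before }\tau_n)$ and dividing by $N_n^{(i)}$, the common $p_\theta$ cancels and the net effect on the difference is the deterministic drift $(p_1-p_0)\tau_n/n$, so that $\sqrt n\,(p_1-p_0)\tau_n/n\Probto d_\infty(p_1-p_0)/m^{\eta}$. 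Combining the two blocks yields
\[
\sqrt n\,\bigl(C_n^{(1)}-C_n^{(0)}\bigr)\distto \sqrt{\tfrac{p_\theta(1-p_\theta)}{p(1-p)}}\,\Normal+\frac{d_\infty(p_1-p_0)}{m^{\eta}}.
\]

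Finally, Slutsky's theorem combined with the denominator asymptotics gives $W_n\distto\Normal+\tfrac{d_\infty(p_1-p_0)\sqrt{p(1-p)}}{m^{\eta}\sqrt{p_\theta(1-p_\theta)}}$, and squaring produces the asserted limit for $\chi^2=W_n^2$; the degenerate case $d_\infty=0$ (in which the drift vanishes and one recovers the classical $\chi^2_1$ limit) is covered by the same decomposition. I expect the main obstacle to be the rigorous separation of the two blocks: one must show, via the functional limit theorem, that the fluctuations of $\tau_n$ (of order $n^{1/4}$) and those of the $O(\sqrt n)$ pre–sell-out conversions are both negligible on the $\sqrt n$ scale, and that the boundary-crossing step with its $J_{\tau_n}$-dependent overshoot contributes nothing in the limit. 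This demands joint, rather than marginal, convergence of $(\tau_n,N_n^{(i)},L_n^{(i)})$, which is exactly what the functional limit theorem is designed to deliver.
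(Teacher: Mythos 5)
Your proposal is correct in substance and takes a genuinely different route from the paper at the level of organization. You collapse the Pearson statistic algebraically at the outset, via the exact identity $\chi^2=W_n^2$ with $W_n$ the pooled two-proportion $z$-statistic, and then prove a scalar CLT with drift for $\sqrt{n}\,(C_n^{(1)}-C_n^{(0)})$. The paper instead proves a joint functional CLT for the four processes $N^{(i)}_{\lfloor nt\rfloor}$, $L^{(i)}_{\lfloor nt\rfloor}$ (Theorem \ref{Thm:joint CLT}, via Donsker for a seven-dimensional i.i.d.\ sum together with inversion and composition arguments for the stopping times $\tau_1(n)$, $\tau_2(n)$), projects at $t=1$ (Theorem \ref{Thm:joint cvgce var in chi^2 statistics}), applies the continuous mapping theorem to get a quadratic form in the correlated Gaussian triple $(G_1,G_2,G_3)$ (Corollary \ref{Cor:cvgce chi^2 statistics}), and only then collapses that form by the variance computation $\Var[p_\theta G_1-pG_2+(1-p)G_3]=p_\theta(1-p_\theta)p(1-p)$. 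Your route never touches the covariance matrices $\mathbf{V}$ and $\mathbf{V_1}$: the cancellation the paper obtains through that variance computation you get for free, because the group-size fluctuations drop out of $C_n^{(1)}-C_n^{(0)}$ when both groups convert at the common post-sell-out rate $p_\theta$; and your drift $d_\infty(p_1-p_0)/m^{\eta}$ and variance $p_\theta(1-p_\theta)/(p(1-p))$ agree exactly with what the paper's $d_2,d_3$ and $\mathbf{V_1}$ produce. The probabilistic core is the same in both arguments — decomposition of $L_n^{(i)}$ at the sell-out time, $\tau_n/\sqrt{n}\Probto d_\infty/m^{\eta}$, and negligibility at scale $\sqrt{n}$ of the middle block and of the order-$n^{1/4}$ fluctuations of $\tau_n$ and of the pre-block counts — so the lemmas you would need are precisely the paper's Lemma \ref{Lem:tau_2-tau_1->0 uniformly in probab} and the Donsker-plus-composition step of Proposition \ref{prop:joint_convergence_direct}; note that since all your drift terms converge in probability to constants, Slutsky suffices and no joint Gaussian limit is ever required, which makes your proof more elementary. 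Two minor corrections: your phrase ``finitely many visitors affected by the boundary overshoot'' is too optimistic — the window between the first attempted crossing $\tau_1(n)$ and the actual arrival $\tau_2(n)$ contains a random, possibly growing number of visitors, and what makes it negligible is the bound $\tau_2(n)-\tau_1(n)=o_P(\sqrt{c_n})$ of Lemma \ref{Lem:tau_2-tau_1->0 uniformly in probab}, not finiteness; and the pre-block fluctuations are $O_P(n^{1/4})$ because the block has length of order $\sqrt{n}$, a point worth stating explicitly. What the paper's heavier machinery buys, and your scalar shortcut discards, is the full joint law $(G_1,G_2,G_3)$ together with the functional statement covering all regimes $c_\infty\in[0,\tfrac1{m^{\eta}})$ and $c_\infty>\tfrac1{m^{\eta}}$ — results of independent interest that the paper reuses, e.g., in the power analysis of Example \ref{Exa:picky customers} and as tools for designing corrected tests in follow-up work.
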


Hence, if one applies the chi-squared test with significance level $\alpha \in (0,1)$ in the given situation,
the test rejects the hypothesis with (asymptotic) probability
\begin{align*}	\textstyle
\Prob\left(\left(\Normal-\frac{d_\infty(p_0-p_1)\sqrt{p(1-p)}}{m^{\eta}\sqrt{p_\theta(1-p_\theta)}}\right)^{\!\! 2} > q_{1-\alpha}\right)
> \Prob\left(\Normal^{2} > q_{1-\alpha}\right) = \alpha.
\end{align*}
In fact, the probability on the left-hand side tends to $1$ as $d_\infty \to \infty$.
We specialize to the situation of Example \ref{Exa:ranking experiment}.

\begin{example}[Ranking experiment, take 3]	\label{Exa:main theorem in ranking experiment}
Recall the situation of Example \ref{Exa:ranking experiment}.
Then, see also Example \ref{Exa:ranking experiment as special case of our model},
we have
\begin{equation*}	\textstyle
p = \frac12,
\qquad	p_\theta = \frac1{20},
\qquad	p_0 = \frac{52}{1\,000},	\qquad p_1 = \frac{9}{100},
\qquad	m^\eta=\frac{42}{1\,000}
\qquad	\text{and}	\qquad
d_\infty = \frac12.
\end{equation*}
Consequently,
\begin{equation}	\label{eq:parameters in ranking experiment}
\frac{d_\infty(p_0-p_1)\sqrt{p(1-p)}}{m^{\eta}\sqrt{p_\theta(1-p_\theta)}}
= \frac{\frac12 \cdot \frac{-38}{1\,000} \cdot \frac12}{\frac{42}{1\,000} \cdot \sqrt{\frac{19}{400}}}
= - \frac{ \sqrt{19} \cdot 5}{21}
= -1.037833\ldots
\end{equation}
Hence, in the given situation, the chi-squared test rejects the hypothesis with (asymptotic) probability
\begin{align*}	\textstyle
\Prob\left((\Normal+1.037833)^{2} > q_{95\%}\right)
= 0.1795898\ldots > 0.05.
\end{align*}
This becomes worse as $d_\infty$ becomes larger, see Figure \ref{fig:false_positives} below.
\begin{figure}[h]
\begin{center}
\includegraphics[width=0.3\textwidth]{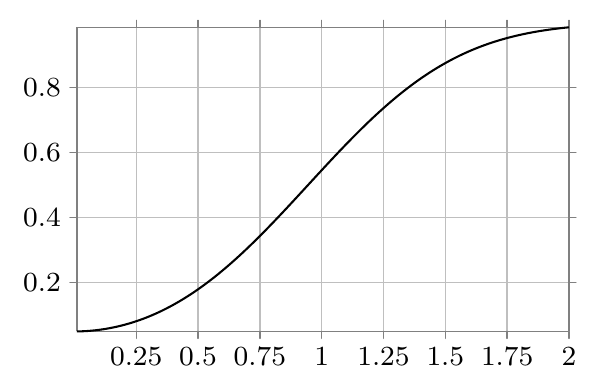}
\caption{False-positive probability as a function of $d_\infty$ with all other parameters fixed as in \eqref{eq:parameters in ranking experiment}.}
\label{fig:false_positives}
\end{center}
\end{figure}

At first glance, this may occur to be no problem as $p_1 > p_0$,
so one is tempted to guess that algorithm $1$ performs better than algorithm $0$
and what we see above is just the power of the test, which becomes better as $d_\infty$ becomes large.
However, we shall argue in Example \ref{Exa:same performance in ranking experiment} below
that the two algorithms perform equally well when used separately.
\end{example}

Next, we show that in the general situation,
assuming that (A1) through (A5) hold and that $\frac{c_n}{n} \to 0$,
we show that on the linear scale, the asymptotic empirical conversion rates of the two versions of the website,
when used separately, are identical.

\begin{theorem}	\label{Thm:SLLN C_n^(p)}
Suppose that (A1) through (A5) hold and that $c_\infty \defeq \lim_{n \to \infty} \frac{c_n}{n} = 0$.
Then, for $p \in \{0,1\}$,
\begin{equation*}
\lim_{n \to \infty} C_n^{(p)} = p_\theta \quad	\Prob_p\text{-a.\,s.}
\end{equation*}
\end{theorem}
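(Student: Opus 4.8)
The plan is to exploit that, under $\Prob_p$ with $p\in\{0,1\}$, only one version of the website is shown, so that the problem reduces to an ordinary strong law once the rare good is exhausted. Concretely, under $\Prob_p$ with $p\in\{0,1\}$ one has $I_k=p$ almost surely for every $k$, hence $N_n^{(p)}=n$ and $(\xi_k,\eta_k)\sim\mu_p$ for all $k$. Therefore $C_n^{(p)}=\tfrac1n L_n^{(p)}=\tfrac1n\sum_{k=1}^n Z_k$, and I only need to show $\tfrac1n\sum_{k=1}^n Z_k\to p_\theta$ $\Prob_p$-a.s. Let $\tau_n\defeq\inf\{k:T_k=c_n\}$ be the first time the rare good is sold out.

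\textbf{Step 1: the hitting time is negligible.} I would first prove $\tau_n/n\to 0$ $\Prob_p$-a.s. The key observation is that whenever $T_{k-1}<c_n$ and $\eta_k=1$, no overshoot can occur (since $T_{k-1}\le c_n-1$), so by (A5) the walk climbs by exactly one, $T_k=T_{k-1}+1$; in all remaining cases $Y_k\ge 0$. Writing $W_k\defeq\sum_{j=1}^k\1_{\{\eta_j=1\}}$ for the number of unit vertical attempts, a short induction then yields $T_k\ge\min(W_k,c_n)$ for all $k$, and hence $\tau_n\le\sigma_n\defeq\inf\{k:W_k\ge c_n\}$. Now $(\1_{\{\eta_j=1\}})_{j}$ are i.i.d.\ Bernoulli with success probability $\beta\defeq\Prob_p(\eta_k=1)\ge\mu_p(\{(0,1)\})>0$ by (A2), so $\sigma_n$ is the waiting time for the $c_n$-th success and the strong law for the associated renewal process gives $\sigma_n/c_n\to 1/\beta$ a.s.\ (recall $c_n\to\infty$). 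Since $c_n/n\to0$, this produces $\tau_n/n\le(\sigma_n/c_n)(c_n/n)\to0$ a.s.

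\textbf{Step 2: law of large numbers after exhaustion.} Once $T_{k-1}=c_n$, assumption (A5) forces $\bR_k=\bR_{k-1}+(\theta_k,0)$, so $Z_k=\1_{\{\theta_k>0\}}$ for every $k>\tau_n$. I would then use the purely algebraic decomposition
\[
C_n^{(p)}=\frac1n\sum_{k=1}^{\tau_n}Z_k+\frac1n\sum_{k=1}^n\1_{\{\theta_k>0\}}-\frac1n\sum_{k=1}^{\tau_n}\1_{\{\theta_k>0\}}.
\]
The first and third terms are bounded in absolute value by $\tau_n/n\to0$ a.s.\ by Step~1, while the middle term tends to $p_\theta$ a.s.\ by the strong law of large numbers applied to the i.i.d.\ indicators $\1_{\{\theta_k>0\}}$, whose mean is $\Prob(\theta>0)=p_\theta$. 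Hence $C_n^{(p)}\to p_\theta$ $\Prob_p$-a.s., as claimed.

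\textbf{Main obstacle.} The only delicate point is Step~1: controlling $\tau_n$ in the presence of the truncating/blocking behaviour at the upper boundary. One cannot usefully bound $T_k$ from below by the free walk $\sum_{j\le k}\eta_j$, because overshooting increments may be blocked ($J_k=0$) or clipped; the clean remedy is to track only the unit attempts $\eta_k=1$, which never overshoot and thus advance the walk by exactly one until the boundary is reached, yielding $T_k\ge\min(W_k,c_n)$. I would state this induction carefully, in particular covering the borderline case in which height $c_n$ is first attained at horizontal coordinate $0$, where the dynamics must still be read as a horizontal $\theta$-step.
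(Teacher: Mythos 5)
Your proof is correct, but it reaches the conclusion by a genuinely more elementary route than the paper. The paper obtains this theorem as an immediate corollary of its general strong law (Theorem~3.3, \emph{Thm:SLLN L\_n's and N\_n's}), whose proof rests on the sharp stopping-time asymptotics $\tau_1(n)/c_n\to 1/m^{\eta}$ together with Lemma~4.2, which shows $(\tau_2(n)-\tau_1(n))/c_n\to 0$ a.s.\ via an exponential Markov bound for the thinned walk $\widetilde S_k=\sum_{j\le k}\eta_j\1_{\{J_j=1\}}$. You bypass the whole two-stage analysis of $\tau_1,\tau_2$ (attempt time versus hitting time) with a single monotone comparison: unit vertical attempts never overshoot, so the induction $T_k\ge\min(W_k,c_n)$ is valid (your case analysis is sound, including $Y_k\ge 0$ for blocked or clipped steps and the boundary phase), giving $\tau_n\le\sigma_n$ and the coarse bound $\tau_n=O(c_n)=o(n)$ a.s., which is all that is needed once $c_n/n\to 0$. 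This trade-off is instructive in both directions. Your argument is self-contained, never touches the $J$-coin, and in fact covers the case $q=0$ (permitted by (A1)), where the paper's proof of Lemma~4.2 degenerates because it requires $\delta\in(0,\varepsilon q m^{\eta})$, an empty interval when $q=0$ --- in that case the boundary is reached only through exact hits, which is precisely what your unit-attempt comparison exploits; you also correctly flag the boundary reading at $(0,c_n)$, where the paper's $\partial C_n=\N\times\{c_n\}$ should be $\N_0\times\{c_n\}$. What the paper's heavier machinery buys, and your coarse bound cannot deliver, are the sharp constants $1/m^{\eta}$ needed for the regimes $c_\infty\in(0,1/m^{\eta})$ and $c_\infty>1/m^{\eta}$ of Theorem~3.3 and for the functional CLT. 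Two cosmetic points: your displayed decomposition should carry $\tau_n\wedge n$ in the summation limits (since a.s.\ $\tau_n\le n$ for all large $n$ by Step~1, the asymptotics are unaffected), and strictly one should note that under $\Prob_p$, $p\in\{0,1\}$, the measure is the degenerate case of (A1) in the paper's convention, exactly as you use it via $N_n^{(p)}=n$.
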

\begin{proof}
The result is a consequence of Theorem \ref{Thm:SLLN L_n's and N_n's} below.
\end{proof}

Hence, in the relevant regime ($\frac{c_n}{n} \to 0$) the first order of growth
of $C_n^{(p)}$ depends only on what happens after the popular good is sold out.
According to our model assumptions, the two versions of the website have identical
performance once the popular good is sold out.
This implies that on the linear scale, there is no difference between the two versions of the website.
Hence, we need to make a comparison on a finer scale.

\subsection{A joint limit theorem for the group sizes and numbers of purchases in each group}

As $\chi^2$ is a function of $(N_n^{(0)},N_n^{(1)},L_n^{(0)},L_n^{(1)})$,
a limit theorem for $\chi^2$ follows from one for the above vector via the continuous mapping theorem \cite[Theorem 2.7]{Billingsley:1999}.
We begin with a strong law of large numbers for the variables $L_n^{(0)}$ and $L_n^{(1)}$
(the corresponding result for $N_n^{(0)}$ and $N_n^{(1)}$ is classical).

\begin{theorem}	\label{Thm:SLLN L_n's and N_n's}
Suppose that (A1) through (A5) are in force and that the limit
\begin{equation*}
c_{\infty}\defeq\lim_{n\to\infty}\frac{c_n}{n}\in [0,\infty)
\end{equation*}
exists\footnote{In the applications we have in mind, $c_{\infty}=0$
because the quantity of good 2 should be much smaller than the total number of observations.
But from a theoretical perspective positive values of $c_{\infty}$ are also interesting
because of the occurrence of different asymptotic regimes.
Let us also stress that $c_{\infty}>0$ necessitates $\rho=1$,
where the definition of $\rho$ may be recalled from \eqref{eq:c_n reg var}.}. If $c_{\infty}\in [0,\frac{1}{m^{\eta}})$, then
\begin{align*}
\tfrac1n L_n^{(0)} \to (1-p)p_\theta+c_{\infty}(1-p)\tfrac{1}{m^{\eta}}(p_0-p_\theta)	
\quad	\text{and}	\quad 
\tfrac1n L_n^{(1)} \to pp_\theta+c_{\infty}p\tfrac{1}{m^{\eta}}(p_1-p_\theta)	\quad	\text{a.\,s.}
\end{align*}
In particular, in the most relevant case $c_\infty=0$,
\begin{align*}
\tfrac1n L_n^{(0)} \to (1-p)p_\theta	
\quad	\text{and}	\quad 
\tfrac1n L_n^{(1)} \to pp_\theta	\quad	\text{a.\,s.}
\end{align*}
If $c_{\infty}>\frac{1}{m^{\eta}}$, then
\begin{align*}
\tfrac1n L_n^{(0)} \to (1-p)p_0	
\quad\text{and}\quad 
\tfrac1n L_n^{(1)} \to pp_1	\quad	\text{a.\,s.}
\end{align*}
\end{theorem}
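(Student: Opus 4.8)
The plan is to reduce everything to the behaviour of the \emph{sellout time}
\[
\tau_n \defeq \inf\{k \in \N : T_k = c_n\},
\]
the first moment at which the popular good is exhausted, and then to split the purchase counts into their pre- and post-$\tau_n$ contributions. Since $\mu_i(\{(0,1)\})>0$ forces $m^{\eta}=\E[\eta]>0$, and $T_k$ is non-decreasing with positive drift, $\tau_n<\infty$ a.s.; moreover $c_n\to\infty$ gives $\tau_n\to\infty$ a.s. The statements for $N_n^{(i)}$ are the classical strong law ($N_n^{(0)}/n\to1-p$, $N_n^{(1)}/n\to p$), so I would focus on $L_n^{(0)},L_n^{(1)}$. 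By (A5), for $k>\tau_n$ the walk sits on the boundary and $(X_k,Y_k)=(\theta_k,0)$, so $Z_k=\1_{\{\theta_k>0\}}$; for $k\le\tau_n$ the walk lives in the interior and $(X_k,Y_k)=(\xi_k,\eta_k)$ on every step except the rare steps where an attempted overshoot is rejected.

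The crucial first step is a strong law for the sellout time,
\[
\frac{\tau_n}{c_n}\to\frac1{m^{\eta}}\qquad\text{and hence}\qquad \frac{\tau_n}{n}\to\frac{c_{\infty}}{m^{\eta}}\quad\text{a.s.}
\]
I would prove this by coupling the true walk to the \emph{free} walk $\widetilde T_k\defeq\sum_{j=1}^{k}\eta_j$, which ignores the boundary and is common to all $n$. Since neither capping at $c_n$ nor staying put ever lets $T$ exceed the free increment, one has $T_k\le\widetilde T_k$; evaluating at $k=\tau_n$ gives $\widetilde T_{\tau_n}\ge c_n$, so $\tau_n$ dominates the first passage of $\widetilde T$ over $c_n$, which is asymptotic to $c_n/m^{\eta}$ by the first-passage strong law. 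For the matching upper bound I would control the accumulated ``waste'' $\widetilde T_{\tau_n}-c_n$, which grows only at rejected-overshoot steps (by $\eta_k$) and at the terminal step (by at most $\eta_{\tau_n}=o(\tau_n)$). Splitting a rejection at level $T_{k-1}$ according to whether $\eta_k>M$ or $c_n-T_{k-1}\le M$, the first type contributes at most $\sum_{k\le\tau_n}\eta_k\1_{\{\eta_k>M\}}\sim\tau_n\,\E[\eta\1_{\{\eta>M\}}]$ by the strong law read along the random index $\tau_n$, while the second type (visits to the top band of width $M$) is $o(\tau_n)$ because each of the $M$ top levels is entered once and escaped after a $\mathrm{Geom}(\ge1-q)$ number of rejections. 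Letting $M\to\infty$ yields $\widetilde T_{\tau_n}-c_n=o(\tau_n)$, and combining with $\widetilde T_{\tau_n}/\tau_n\to m^{\eta}$ gives $m^{\eta}\tau_n=c_n+o(\tau_n)$, i.e.\ the claimed asymptotics; note the loop is non-circular, as every estimate is relative to $\tau_n\to\infty$.

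With $\tau_n$ under control, both phases follow from the strong law evaluated at the (random, divergent) indices $\tau_n$ and $n$. Writing $\beta\defeq(c_{\infty}/m^{\eta})\wedge1=\lim\tau_n/n$ and $\pi_0\defeq1-p$, $\pi_1\defeq p$, the post-sellout count, written as the difference $\frac1n\sum_{k=1}^{n}-\frac{\tau_n}{n}\cdot\frac1{\tau_n}\sum_{k=1}^{\tau_n}$ of two strong-law averages (which sidesteps any dependence between $\tau_n$ and future draws), satisfies
\[
\frac1n\sum_{k=\tau_n+1}^{n}\1_{\{\theta_k>0,\,I_k=i\}}\to(1-\beta)\,p_{\theta}\,\pi_i.
\]
For the pre-sellout count the true indicator $\1_{\{Z_k=1\}}$ agrees with $\1_{\{\xi_k+\eta_k>0\}}$ off the rejected-overshoot steps, whose number is $o(\tau_n)$ by the same band estimate, so
\[
\frac1n\sum_{k=1}^{\tau_n}\1_{\{Z_k=1,\,I_k=i\}}=\frac{\tau_n}{n}\cdot\frac1{\tau_n}\sum_{k=1}^{\tau_n}\1_{\{\xi_k+\eta_k>0,\,I_k=i\}}+o(1)\to\beta\,p_i\,\pi_i.
\]
Adding the two limits gives $L_n^{(i)}/n\to\pi_i\big[\beta p_i+(1-\beta)p_{\theta}\big]$: for $\beta<1$ this equals $\pi_i p_{\theta}+\tfrac{c_{\infty}}{m^{\eta}}\pi_i(p_i-p_{\theta})$, matching the stated formula and reducing to $\pi_i p_{\theta}$ when $c_{\infty}=0$, while for $\beta=1$ (no sellout) it reduces to $\pi_i p_i$.

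The main obstacle is the waste estimate behind the sellout-time law: one must show that rejected and terminal overshoots are negligible on the scale $c_n$ \emph{uniformly} in the strip width, since the walks $(T_k)$ form a genuine triangular array indexed by $n$. Coupling to the single free walk $\widetilde T$ and reading the strong law along the random index $\tau_n$ is exactly what removes the triangular-array difficulty; the one delicate point is upgrading the band bound to an almost-sure $o(\tau_n)$ statement, which I would handle by a Borel--Cantelli argument using the per-level geometric escape (rate $\ge1-q$) together with the finite second moments guaranteed by (A4).
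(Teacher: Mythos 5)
Your argument is correct, and it shares the paper's overall skeleton---cut $L_n^{(i)}$ at the sell-out time, apply the strong law along the almost surely divergent random indices, and feed in an a.s.\ law $\tau/c_n\to 1/m^{\eta}$ for the sell-out time---but you prove the key stopping-time lemma by a genuinely different route. The paper introduces \emph{two} stopping times: $\tau_1(n)$, the first \emph{attempted} crossing of level $c_n$, and $\tau_2(n)$, the actual hitting time (your $\tau_n$). Before $\tau_1(n)$ the walk coincides with the free walk, so $\tau_1(n)/c_n\to 1/m^{\eta}$ is literally the classical first-passage strong law, and the whole boundary layer is absorbed into Lemmas \ref{Lem:tau_2-tau_1->0 uniformly in probab} and \ref{lem:tau_differecne_as}, which dominate $\tau_2(n)-\tau_1(n)$ by the first time the forced sums $\widetilde S_k=\sum_{j\le k}\eta_j\1_{\{J_j=1\}}$ climb the residual gap (exponential Markov bound, resp.\ the SLLN); $L_n^{(i)}$ is then split into \emph{three} blocks, the middle one of size $\tau_2(n)-\tau_1(n)+1=o(c_n)$. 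You instead control $\tau_n=\tau_2(n)$ in one stroke: the coupling $T_k\le\widetilde T_k$ gives the lower bound via the free first-passage law, and the waste estimate (truncation at $M$, top-band visits with per-level geometric escape, Borel--Cantelli---summable since $c_n$ grows polynomially by regular variation) gives the matching upper bound. This costs more bookkeeping but buys two things: the same band estimate delivers the $o(\tau_n)$ count of rejected steps, which you need anyway to identify $\1_{\{Z_k=1\}}$ with $\1_{\{\xi_k+\eta_k>0\}}$ pre-sell-out (the paper hides this discrepancy inside the middle block), and it survives $q=0$, where the paper's gap lemmas as written degenerate (the interval $(0,\varepsilon q m^{\eta})$ for $\delta$ is empty) and one must escape the top band by exact unit climbs, which your argument covers via $\mu_i(\{(0,1)\})>0$.

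Two small repairs. First, your escape rate ``$\mathrm{Geom}(\ge 1-q)$'' is misparametrized: rejections occur with probability $1-q$ per overshoot attempt, so from a level $\ell\le c_n-1$ the per-relevant-step escape probability (a move up, probability at least $\Prob(\eta=1)>0$, or an accepted cap, probability $q$ given an overshoot) is bounded below by a constant $\kappa>0$ depending only on $q$ and the law of $\eta$, uniformly in $n$ and in the level; this $\kappa$, not $1-q$, is the geometric rate your Borel--Cantelli step needs. Second, your case split $\beta=(c_\infty/m^{\eta})\wedge 1<1$, i.e.\ $c_\infty<m^{\eta}$, is the threshold the argument actually produces---yours and the paper's alike, since $\tau_2(n)\sim c_n/m^{\eta}\sim c_\infty n/m^{\eta}$, so sell-out occurs before time $n$ exactly when $c_\infty/m^{\eta}<1$; the condition $c_\infty<\frac{1}{m^{\eta}}$ printed in the theorem appears to be a typo for $c_\infty\cdot\frac{1}{m^{\eta}}<1$, and indeed only with threshold $m^{\eta}$ do the two stated limit formulas agree at the boundary.
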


We continue with the asymptotic law of the vector $(N_n^{(0)},N_n^{(1)},L_n^{(0)},L_n^{(1)})$,
suitably shifted and scaled in the most relevant scenario where $c_n$ is of the order $\sqrt{n}$.

\begin{theorem}	\label{Thm:joint cvgce var in chi^2 statistics}
Suppose that (A1) through (A5) are in force and suppose in addition to \eqref{eq:c_n reg var} that the limit
\begin{equation*}
d_\infty = \lim_{n\to\infty}\frac{c_n}{\sqrt{n}}\in [0,\infty)
\end{equation*}
exists, we have, as $n\to\infty$,
\begin{multline}	\label{eq:cor weak convergence2}
\bigg(\frac{N^{(0)}_{n}-(1-p)n}{\sqrt{n}},\frac{N^{(1)}_{n}-pn}{\sqrt{n}},\frac{L_{n}^{(0)}-n(1-p)p_\theta}{\sqrt{n}},\frac{L_{n}^{(1)}-npp_\theta}{\sqrt{n}}\bigg)\\
\distto d_\infty \big(0,0,(1\!-\!p)\tfrac{1}{m^\eta}(p_0\!-\!p_\theta),p\tfrac{1}{m^\eta}(p_1\!-\!p_\theta)\big)+ (G_1,-G_1,G_2,G_3)
\end{multline}
where $\left(G_1,G_2,G_3\right)$ is a centered Gaussian vector with covariance matrix 
\begin{equation}	\label{eq:covariance_matrix_v1}
\mathbf{V_1}=\left(
\begin{matrix}
p(1\!-\!p)			&  p(1\!-\!p)p_\theta					& -p(1\!-\!p)p_\theta		\\
p(1\!-\!p)p_\theta	& p_\theta(1\!-\!p)(1\!-\!p_\theta(1\!-\!p))	& -p(1\!-\!p)p_\theta^2	\\
-p(1\!-\!p)p_\theta	& -p(1\!-\!p)p_\theta^2				& pp_\theta(1\!-\!pp_\theta)\\
\end{matrix}\right).
\end{equation}
\end{theorem}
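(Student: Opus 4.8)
The plan is to split each purchase count at the sellout time
$$\tau_n \defeq \inf\{k \in \N : T_k = c_n\},$$
and to argue that the Gaussian fluctuations come entirely from the post-sellout phase, while the pre-sellout phase contributes only a deterministic shift of order $\sqrt n$ and the randomness of $\tau_n$ itself is of smaller order. The first task is to control $\tau_n$: up to the single overshoot step, $(T_k)$ is the random walk with i.i.d.\ increments $\eta_1, \eta_2, \ldots$ of mean $m^{\eta} > 0$ (positivity follows from $\mu_i(\{(0,1)\}) > 0$), so the law of large numbers for first-passage times together with $c_n \sim d_\infty \sqrt n$ yields
$$\tau_n = \tfrac{d_\infty}{m^{\eta}}\sqrt n + o_{\Prob}(\sqrt n), \qquad \tau_n < n \text{ with probability tending to } 1.$$
The decisive observation is that the first-passage fluctuations are of order $\sqrt{c_n} \asymp n^{1/4} = o(\sqrt n)$, so the randomness of the sellout time drops out at the scale of the theorem.

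The coordinates $N_n^{(0)}$ and $N_n^{(1)}$ need no decomposition: because $N_n^{(1)} - pn = \sum_{k=1}^n (I_k - p) = -(N_n^{(0)} - (1-p)n)$, they form exactly the pair $(G_1, -G_1)$ with $G_1 \defeq \lim_n n^{-1/2}\sum_{k=1}^n (\1_{\{I_k = 0\}} - (1-p))$. For the purchase counts I would write, for $i = 0, 1$,
$$L_n^{(i)} = \sum_{k=1}^{\tau_n} \1_{\{Z_k > 0,\, I_k = i\}} + \sum_{k=\tau_n+1}^n \1_{\{\theta_k > 0,\, I_k = i\}},$$
using that after sellout $Z_k = \1_{\{\theta_k > 0\}}$, while before sellout $Z_k = \1_{\{\xi_k + \eta_k > 0\}}$ off the single overshoot step (an $O(1)$, hence negligible, discrepancy). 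In the pre-sellout sum the group-$i$ summand has mean $p\,p_1$ if $i=1$ and $(1-p)p_0$ if $i=0$; since the summands are nonnegative I can sandwich the random-index sum between the deterministic-index sums up to $(1 \mp \epsilon)\tfrac{d_\infty}{m^{\eta}}\sqrt n$ and invoke the i.i.d.\ law of large numbers on these windows, so that the pre-sellout sums equal $\tfrac{d_\infty}{m^{\eta}} p\,p_1 \sqrt n$ resp.\ $\tfrac{d_\infty}{m^{\eta}}(1-p)p_0 \sqrt n$, each up to $o_{\Prob}(\sqrt n)$.

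In the post-sellout sum the deterministic part is $(n - \tau_n)\, p\, p_\theta$ resp.\ $(n-\tau_n)(1-p)p_\theta$, and, conditioning on the $\sigma$-field generated by $(I_k, \xi_k, \eta_k, J_k)_{k}$, which is independent of $(\theta_k)_{k}$, the omitted first $\tau_n$ centered terms are $o_{\Prob}(\sqrt n)$. Combining the two phases and using $\tau_n = \tfrac{d_\infty}{m^{\eta}}\sqrt n + o_{\Prob}(\sqrt n)$ gives
$$\frac{L_n^{(1)} - n p p_\theta}{\sqrt n} = d_\infty\, p\, \tfrac{1}{m^{\eta}}(p_1 - p_\theta) + \frac{A_n}{\sqrt n} + o_{\Prob}(1), \qquad A_n \defeq \sum_{k=1}^n \big(\1_{\{\theta_k > 0,\, I_k = 1\}} - p p_\theta\big),$$
and the analogous identity for $L_n^{(0)}$ with $B_n \defeq \sum_{k=1}^n (\1_{\{\theta_k > 0,\, I_k = 0\}} - (1-p)p_\theta)$.

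Finally I would apply the multivariate central limit theorem to the i.i.d.\ mean-zero vectors $(\1_{\{I_k=0\}}-(1-p),\ \1_{\{\theta_k>0,I_k=0\}}-(1-p)p_\theta,\ \1_{\{\theta_k>0,I_k=1\}}-pp_\theta)$, whose second moments are finite by (A4), to obtain convergence of $n^{-1/2}$ times their partial sums to a vector $(G_1, G_2, G_3) \sim \Normal(\bfnull, \mathbf{V_1})$; a direct computation using $\theta \perp I$ and $\1_{\{I=0\}}\1_{\{I=1\}} = 0$ verifies that the covariance matrix is exactly $\mathbf{V_1}$. Slutsky's theorem then assembles the four coordinates into the stated limit. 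The main obstacle is the coupling between the sellout time $\tau_n$ and the summands of $L_n^{(i)}$; this is precisely what the nonnegativity/sandwiching argument (pre-sellout) and the conditioning argument (post-sellout) are designed to neutralise, and it is also the point at which a functional limit theorem for $(\bR_{\lfloor n\cdot\rfloor})$, as announced in the abstract, could be substituted to produce the joint convergence in one stroke.
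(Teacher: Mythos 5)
Your proposal is correct in outline but takes a genuinely different route from the paper. The paper obtains this theorem as the $t=1$ marginal of a full functional limit theorem (Theorem \ref{Thm:joint CLT}): it applies Donsker's invariance principle to the seven-dimensional i.i.d.\ sums of $\big(\1_{\{\xi_k+\eta_k>0,I_k=i\}},I_k,\xi_k,\eta_k,\1_{\{\theta_k>0,I_k=i\}}\big)$ (Proposition \ref{prop:joint_convergence_direct}), a first-passage FCLT for $\tau_1(n\cdot)$ from Whitt, and $J_1$-continuity of addition and composition, so that $(G_1,G_2,G_3)=(-B_3(1),B_6(1),B_7(1))$ with covariances read off \eqref{eq:covariance matrix V}. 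You avoid all the random-time-change machinery by arguing at fixed $n$: since the sellout time is $O_{\Prob}(\sqrt n)$, the pre-sellout sums have fluctuations $O_{\Prob}(n^{1/4})$ and contribute only the drift $d_\infty\tfrac{1}{m^\eta}(1-p)p_0$ resp.\ $d_\infty\tfrac{1}{m^\eta}pp_1$ (minus the corresponding $p_\theta$ corrections), while the Gaussian part comes entirely from the \emph{full} sums $\sum_k(I_k-p)$, $A_n$, $B_n$, so joint convergence is automatic from one trivariate CLT plus Slutsky — no $J_1$ arguments needed. Your conditioning step is valid ($\tau_n$ is measurable with respect to $\sigma((I_k,\xi_k,\eta_k,J_k)_k)$, which is independent of $(\theta_k)_k$, and the conditional variance of the omitted centered terms is at most $\tau_n=O_{\Prob}(\sqrt n)$), and your covariance computation does reproduce $\mathbf{V_1}$. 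What the paper's heavier route buys is the process-level statement \eqref{eq:joint FCLT small c_n}, reused for the regime $c_n\asymp n$ and for the power analysis in Example \ref{Exa:picky customers}; your route buys a much shorter proof of the fixed-time statement.

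There is, however, one genuine gap: your claim that, \emph{up to a single overshoot step}, $(T_k)$ is the free random walk with increments $\eta_k$, and that the discrepancy between $Z_k$ and $\1_{\{\xi_k+\eta_k>0\}}$ before sellout is $O(1)$. Under (A5) this is false as stated. Whenever $T_{k-1}+\eta_k>c_n$ and $J_k=0$, the entire step is cancelled — the walk stays put and the $k$th customer buys nothing even though $\xi_k+\eta_k>0$ — and such cancellations can recur, interleaved with executed small steps, arbitrarily many times between the first attempted crossing $\tau_1(n)=\inf\{k:T_{k-1}+\eta_k\geq c_n\}$ and the actual absorption $\tau_2(n)=\inf\{k:T_k=c_n\}$, which is your $\tau_n$. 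Your first-passage LLN/CLT applies to $\tau_1(n)$, not to $\tau_2(n)$, and the number of indices where $Z_k\neq\1_{\{\xi_k+\eta_k>0\}}$ is bounded by $\tau_2(n)-\tau_1(n)+1$, a random quantity. So both your estimate $\tau_n=\tfrac{d_\infty}{m^\eta}\sqrt n+o_{\Prob}(\sqrt n)$ and the negligibility of the discrepancy require a proof that $\tau_2(n)-\tau_1(n)=o_{\Prob}(\sqrt n)$. This is precisely what the paper's Lemma \ref{Lem:tau_2-tau_1->0 uniformly in probab} supplies (in stronger, uniform form): one bounds $\tau_2(n)-\tau_1(n)$ by the first-passage time of $\widetilde S_k=\sum_{j\leq k}\eta_{\tau_1(n)+j}\1_{\{J_{\tau_1(n)+j}=1\}}$ over the overshoot gap $c_n-T_{\tau_1(n)}\leq\eta_{\tau_1(n)}=o_{\Prob}(\sqrt{c_n})$ and closes with a Chernoff bound, using $\E[\eta\1_{\{J=1\}}]=qm^\eta>0$. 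With such a lemma inserted, your argument is complete; without it, the step fails.
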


Notice that the theorem contains a limit theorem for the pure conversion rates $C_n^{(0)}$ and $C_n^{(1)}$
by choosing $p=0$ and projecting on the third coordinate or by choosing $p=1$ and projecting on the fourth component, respectively.
This gives, with $\Normal(\mu,\sigma^2)$ denoting a normal random variable with mean $\mu$ and variance $\sigma^2$,
\begin{align}	
\frac{L_{n}^{(0)}-np_\theta}{\sqrt{n}} &\distto \Normal(d_\infty \tfrac{p_0\!-\!p_\theta}{m_0^\eta},p_\theta(1\!-\!p_\theta))
\quad	\text{under }	\Prob_0	\label{eq:limit theorem for C_n^0}	\\
\text{and}\qquad
\frac{L_{n}^{(1)}-np_\theta}{\sqrt{n}} &\distto \Normal(d_\infty \tfrac{p_1\!-\!p_\theta}{m_1^\eta},p_\theta(1\!-\!p_\theta))
\quad	\text{under }	\Prob_1	\label{eq:limit theorem for C_n^1}
\end{align}
where $m^\eta = pm^\eta_1+(1-p)m^\eta_0$ with $p=0$ and $p=1$, respectively, has been used.
Hence, if the two expectations in \eqref{eq:limit theorem for C_n^0} and \eqref{eq:limit theorem for C_n^1}
coincide, then the performances of the two websites coincide asymptotically both on the linear scale as well as on the level of fluctuations.
The subsequent example demonstrates that this can be the case even if $p_0 \not = p_1$.

\begin{example}[Ranking experiment, take 4]	\label{Exa:same performance in ranking experiment}
Recall the situation of Example \ref{Exa:ranking experiment} and the calculations of Example \ref{Exa:main theorem in ranking experiment}:
\begin{equation*}	\textstyle
p_\theta = \frac1{20},
\qquad	p_0 = \frac{52}{1\,000},	\qquad p_1 = \frac{9}{100},
\qquad	m_0^\eta=\frac{4}{1\,000},
\qquad	m_1^\eta=\frac{8}{100}
\qquad	\text{and}	\qquad
d_\infty = \frac12.
\end{equation*}
Consequently,
\begin{equation*}
\tfrac{p_0\!-\!p_\theta}{m_0^\eta} = \frac{\frac2{1\,000}}{\frac{4}{1\,000}} = \frac12
=\frac{\frac4{100}}{\frac{8}{100}} = \tfrac{p_1\!-\!p_\theta}{m_1^\eta}.
\end{equation*}
This means that the two limits in \eqref{eq:limit theorem for C_n^0} and \eqref{eq:limit theorem for C_n^1}
coincide in the given example even though $p_1 > p_0$.
\end{example}

From Theorem \ref{Thm:joint cvgce var in chi^2 statistics}, we may immediately deduce a limit theorem for $\chi^2$,
which is a preliminary version of Theorem \ref{Thm:cvgce chi^2 statistics}.

\begin{corollary}	\label{Cor:cvgce chi^2 statistics}
Suppose that (A1) through (A5) and \eqref{eq:c_n reg var} are in force and that the limit
\begin{equation*}
d_\infty = \lim_{n\to\infty}\frac{c_n}{\sqrt{n}}\in [0,\infty)
\end{equation*}
exists, we have the following limit theorem for the chi-squared statistics defined by \eqref{eq:chi_square}
\begin{align*}
\chi^2 &\distto
\frac{(d_2+G_2-p_\theta G_1-(1-p)(d_2+d_3+G_2+G_3))^2}{(1-p_\theta)p_\theta(1-p)}	\\
&\hphantom{\distto\ }+\frac{(d_3+G_3+p_\theta G_1-p(d_2+d_3+G_2+G_3))^2}{(1-p_\theta)p_\theta p}
\qquad	\text{as }	n \to \infty
\end{align*}
where $(0,0,d_2,d_3)$ denotes the expectation of the right-hand side in \eqref{eq:cor weak convergence2}
and $(G_1,G_2,G_3)$ is the Gaussian vector from Theorem \ref{Thm:joint cvgce var in chi^2 statistics}.
\end{corollary}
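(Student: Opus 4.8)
The plan is to realize $\chi^2$ as a continuous function of the vector whose joint weak limit is supplied by Theorem \ref{Thm:joint cvgce var in chi^2 statistics}, and then to pass to the limit by the continuous mapping theorem \cite[Theorem 2.7]{Billingsley:1999} combined with Slutsky's theorem. Write the four centered and scaled coordinates as
\[
A_n^{(0)} \defeq \tfrac{N^{(0)}_{n}-(1-p)n}{\sqrt{n}},\ \ A_n^{(1)} \defeq \tfrac{N^{(1)}_{n}-pn}{\sqrt{n}},\ \ B_n^{(0)} \defeq \tfrac{L_{n}^{(0)}-n(1-p)p_\theta}{\sqrt{n}},\ \ B_n^{(1)} \defeq \tfrac{L_{n}^{(1)}-npp_\theta}{\sqrt{n}},
\]
so that Theorem \ref{Thm:joint cvgce var in chi^2 statistics} reads $(A_n^{(0)},A_n^{(1)},B_n^{(0)},B_n^{(1)}) \distto (G_1,-G_1,d_2+G_2,d_3+G_3)$, where $(d_2,d_3)$ are the two nontrivial coordinates of the drift vector in \eqref{eq:cor weak convergence2}.

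First I would reduce the four summands in \eqref{eq:chi_square} to two. Using the identity $(n-L_n)\tfrac{N_n^{(i)}}{n} = N_n^{(i)} - L_n\tfrac{N_n^{(i)}}{n}$ one checks that, for each fixed group $i$, the numerator of the ``no purchase'' summand equals the negative of the numerator of the ``purchase'' summand; hence the two summands belonging to group $i$ carry the common squared numerator $\big(L_n^{(i)}-L_n\tfrac{N_n^{(i)}}{n}\big)^2$ and differ only in their denominators $L_n\tfrac{N_n^{(i)}}{n}$ and $(n-L_n)\tfrac{N_n^{(i)}}{n}$.

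Second I would expand this common numerator. Substituting $N_n^{(i)}$, $L_n^{(i)}$ and $L_n = np_\theta + \sqrt n\,(B_n^{(0)}+B_n^{(1)})$ and collecting powers of $n$ gives
\[
L_n^{(0)}-L_n\tfrac{N_n^{(0)}}{n} = \sqrt n\,\big(B_n^{(0)}-p_\theta A_n^{(0)}-(1-p)(B_n^{(0)}+B_n^{(1)})\big) - (B_n^{(0)}+B_n^{(1)})A_n^{(0)},
\]
together with the companion identity for group $1$ (replace $(1-p)$ by $p$ and the superscript $0$ by $1$). Since the last product term is $O_{\Prob}(1)$, it is negligible against the $\sqrt n$ term, so $n^{-1/2}\big(L_n^{(i)}-L_n\tfrac{N_n^{(i)}}{n}\big)$ converges jointly to $U_0 \defeq d_2+G_2-p_\theta G_1-(1-p)(d_2+d_3+G_2+G_3)$ for $i=0$ and to $U_1 \defeq d_3+G_3+p_\theta G_1-p(d_2+d_3+G_2+G_3)$ for $i=1$ (here $A_n^{(1)}\to -G_1$ supplies the sign flip). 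As a sanity check, $U_0+U_1=0$, reflecting the single degree of freedom of the $2\times2$ table. For the denominators, the classical strong law for $N_n^{(i)}$ and Theorem \ref{Thm:SLLN L_n's and N_n's} with $c_\infty=0$ (legitimate because $d_\infty<\infty$ forces $c_n/n\to0$) give $\tfrac1n N_n^{(0)}\to 1-p$, $\tfrac1n N_n^{(1)}\to p$ and $\tfrac1n L_n\to p_\theta$ almost surely, whence the four normalized denominators converge to the strictly positive constants $p_\theta(1-p)$, $(1-p_\theta)(1-p)$, $p_\theta p$ and $(1-p_\theta)p$.

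Finally I would assemble the pieces: writing each group's contribution as $\big(n^{-1/2}(L_n^{(i)}-L_n\tfrac{N_n^{(i)}}{n})\big)^2$ times the sum of the two reciprocal normalized denominators yields the reciprocal factors $\tfrac1{p_\theta(1-p)}+\tfrac1{(1-p_\theta)(1-p)}=\tfrac1{p_\theta(1-p_\theta)(1-p)}$ and $\tfrac1{p_\theta(1-p_\theta)p}$. Because the map $(x,y)\mapsto x^2/y$ is continuous at each deterministic, strictly positive limiting denominator, Slutsky's theorem and the continuous mapping theorem deliver the stated limit $\tfrac{U_0^2}{(1-p_\theta)p_\theta(1-p)}+\tfrac{U_1^2}{(1-p_\theta)p_\theta p}$. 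The only delicate point is bookkeeping the error terms: one must confirm that both the $O_{\Prob}(1)$ remainder in the numerator and the random fluctuations of the denominators vanish after the $1/n$ normalization, so that the weak convergence survives division. This is exactly the role of Slutsky's theorem and is the main (though routine) obstacle.
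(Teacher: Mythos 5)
Your proposal is correct and takes essentially the same route as the paper: the paper disposes of this corollary in a single line, observing that it follows from Theorem \ref{Thm:joint cvgce var in chi^2 statistics} via the continuous mapping theorem, and your argument is exactly that observation carried out in detail (reduction of the four summands to two, expansion of the common numerator, the $o_{\Prob}(1)$ remainder, and Slutsky for the denominators, using $c_\infty=0$ forced by $d_\infty<\infty$ to invoke Theorem \ref{Thm:SLLN L_n's and N_n's}). All of your computations check out, including the sign flip $A_n^{(1)}\distto -G_1$, the identity $U_0+U_1=0$, and the reciprocal-denominator sums $\tfrac1{p_\theta(1-p)}+\tfrac1{(1-p_\theta)(1-p)}=\tfrac1{p_\theta(1-p_\theta)(1-p)}$.
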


We close this section with another example showing that ignoring the dependencies might also lead
to a high false negative rate.

\begin{example}[Ranking experiment with picky customers]	\label{Exa:picky customers}
We consider a variant of Example \ref{Exa:ranking experiment}
in which there are picky customers that will only buy the rare good.
This time, we use the former Algorithm $1$ from above as the default algorithm displaying the rare goods first.
The former Algorithm $0$ strategically keeps the rare inventory for later arrival of picky customers
by ranking the rare good low.
In an experiment with shared inventory, Algorithm $1$ sells off the rare good greedily,
and the value of Algorithm $0$'s strategy will not be properly assessed in the model ignoring dependencies.
We make this precise in the following.

Again suppose that during a test phase, $n=4\,000\,000$ customers visit the website.
Again, there are $c_n = 1\,000$ rare goods while good $1$ is available at sufficient quantities.

The algorithms work as in Example \ref{Exa:ranking experiment}, but there is a difference
in the behavior of the customers.
We assume that, independent of all other customers,
each customer has a $1\%$ chance of being picky.
If not picky, the customer behaves like the customers of Example \ref{Exa:ranking experiment}.
A picky customer, however, will search as long as required to check whether
there is something of the rare good left. If the rare good is still available,
the picky customer buys one unit with $50\%$ chance.
Otherwise, the customer leaves the website.
The overviews given in Figure \ref{fig:Algo 1} still apply to regular customers,
for picky customers and when good 2 is still available, there is a simplified decision tree:
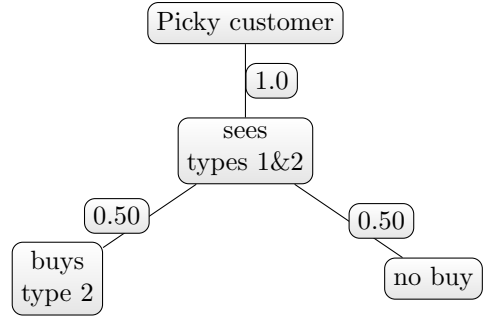
\begin{figure}[h]
\begin{tikzpicture}[level distance=1.7cm,level 1/.style={sibling distance=7cm},
level 2/.style={sibling distance=5cm},
level 3/.style={sibling distance=3.2cm},
  every node/.style = {shape=rectangle, rounded corners,
    draw, align=center,
    top color=white, bottom color=black!10}]
  \node {Picky customer}
        child {node {sees\\ types $1\&2$}
        	child{node {buys\\ type $2$}
     	edge from parent node[left] {$0.50$}}
        	child{node {no buy}
        edge from parent node[right] {$0.50$}}
edge from parent node[right] {$1.0$}};
\end{tikzpicture}
	\caption{Ranking experiment: Picky customer behavior when good $2$ is still available}
	\label{fig:Picky customer}
\end{figure}

\noindent
A calculation of the relevant model parameters in this example
based on the corresponding parameter values from Example \ref{Exa:main theorem in ranking experiment}
gives $p=\frac12$ and $d_\infty = \frac12$ as before and
\begin{align*}	\textstyle
p_\theta = \frac{99}{100} \cdot \frac1{20},
\quad 	p_0 = \frac{99}{100} \cdot\frac{52}{1\,000} + \frac1{100} \cdot \frac12,
\quad  	p_1 = \frac{99}{100} \cdot \frac{9}{100} + \frac1{100} \cdot \frac12,
\quad 	m^\eta = \frac{99}{100} \cdot \frac{42}{1\,000} + \frac1{100} \cdot \frac12.
\end{align*}
We shall now show that Algorithm $0$ performs actually better than Algorithm $1$.
We have
\begin{align*}	\textstyle
m_0^\eta=\frac{99}{100} \cdot\frac{4}{1\,000} + \frac1{100} \cdot \frac12
\quad	\text{and}	\quad
m_1^\eta=\frac{99}{100} \cdot\frac{8}{100}  + \frac1{100} \cdot \frac12.
\end{align*}
Consequently,
\begin{equation*}
d_\infty \cdot \tfrac{p_0\!-\!p_\theta}{m_0^\eta} = \frac{349}{896} > 
\frac{223}{842} = d_\infty \cdot \tfrac{p_1\!-\!p_\theta}{m_1^\eta}.
\end{equation*}
In view of \eqref{eq:limit theorem for C_n^0} and \eqref{eq:limit theorem for C_n^1},
Algorithm $0$ does perform better than Algorithm $1$.
Now let us calculate the probability that the chi-squared test
rejects the hypothesis that Algorithm $0$ and Algorithm $1$ perform equally well.
To this end, we first calculate
\begin{align*}
\frac{d_\infty (p_1-p_0) \sqrt{p(1-p)}}{m^\eta \sqrt{p_\theta(1-p_\theta)}}
= 0.930852\ldots.
\end{align*}
According to Theorem \ref{Thm:cvgce chi^2 statistics},
the chi-squared test (with significance level $5\%$) rejects the hypothesis with probability $0.1536348\dotsc$.
However, it is a standard practice to say that Algorithm $0$ is significantly better than Algorithm $1$
only when $\chi^2 > q_{1-\alpha}$ and $C_n^{(0)} > C_n^{(1)}$ (with $\alpha \in (0,1)$ being the significance level).
So, asymptotically, the power of the test is $\lim_{n \to \infty} \Prob(C_n^{(0)} > C_n^{(1)}, \chi^2 > q_{1-\alpha})$.
We shall now calculate this probability in the given situation with $d_\infty = 1/2$ but also as a function
of $d_\infty$ to point out that the probability becomes arbitrarily small as $d_\infty$ becomes large.
We begin by reformulating the condition $C_n^{(0)} > C_n^{(1)}$.
Recall that $C_n^{(i)} = L_n^{(i)}/N_n^{(i)}$ for $i=0,1$. Hence,
\begin{align*}
&C_n^{(0)} > C_n^{(1)}
\quad	\text{iff}	\quad
L_n^{(0)} N_n^{(1)} - L_n^{(1)} N_n^{(0)} > 0	\\
&\text{iff}	\quad
(L_n^{(0)}-\tfrac{n p_\theta}2) (N_n^{(1)}-\tfrac{n}2) - (L_n^{(1)}-\tfrac{n p_\theta}2) (N_n^{(0)}-\tfrac{n}2)
+ \tfrac{np_\theta}{2} (N_n^{(1)}-N_n^{(0)})
+ \tfrac{n}2 (L_n^{(0)}-L_n^{(1)}) > 0 	\\
&\text{iff}	\quad
\frac2{\sqrt{n}} \frac{L_n^{(0)}-n p_\theta/2}{\sqrt{n}} \cdot \frac{N_n^{(1)}-n/2}{\sqrt{n}}
- \frac2{\sqrt{n}} \frac{L_n^{(1)}- n p_\theta/2}{\sqrt{n}} \cdot \frac{N_n^{(0)}-n/2}{\sqrt{n}}	\\
&\hphantom{\text{iff}}	\quad
- p_\theta \frac{N_n^{(0)} - n/2}{\sqrt{n}} + p_\theta \frac{N_n^{(1)} - n/2}{\sqrt{n}}
+ \frac{L_n^{(0)}-np_\theta/2}{\sqrt{n}} - \frac{L_n^{(1)}-n p_\theta/2}{\sqrt{n}}  > 0.
\end{align*}
By Theorem \ref{Thm:joint cvgce var in chi^2 statistics} and Slutsky's theorem,
the two terms in the penultimate line tend to $0$ in probability as $n \to \infty$.
By Theorem \ref{Thm:joint cvgce var in chi^2 statistics}, the other summands converge in distribution
so that in the limit, the above inequality becomes
\begin{align*}
-2 p_\theta G_1 + d_\infty \tfrac{p_0\!-\!p_\theta}{2 m^\eta} + G_2 - d_\infty \tfrac{p_1\!-\!p_\theta}{2 m^\eta} - G_3 > 0,
\end{align*}
which can be simplified to
\begin{align*}
-2 p_\theta G_1 + G_2 - G_3 > d_\infty \tfrac{p_1\!-\!p_0}{2 m^\eta}.
\end{align*}
By Theorem \ref{Thm:cvgce chi^2 statistics}, $\chi^2$ converges also in distribution.
According to Corollary \ref{Cor:cvgce chi^2 statistics}, we can express the condition
$\chi^2>q_{1-\alpha}$ in the limit as $n \to \infty$ in the form
\begin{align}
(d_2+G_2-p_\theta G_1-\tfrac12(d_2+d_3+G_2+G_3))^2
+ (d_3+G_3+p_\theta G_1-\tfrac12(d_2+d_3+G_2+G_3))^2	\notag	\\
> \tfrac12 (1-p_\theta)p_\theta q_{1-\alpha}		\label{eq:chi^2 > q_1-alpha in terms of Gs}
\end{align}
where $(0,0,d_2,d_3)$ is the expectation vector in \eqref{eq:cor weak convergence2},
i.e.,
$(0,0,d_2,d_3)
= d_\infty \big(0,0,\tfrac{p_0\!-\!p_\theta}{2 m^\eta},\tfrac{p_1\!-\!p_\theta}{2m^\eta}\big)$.
Since the convergence in Theorem \ref{Thm:joint cvgce var in chi^2 statistics} is jointly
and since the law of $(G_1,G_2,G_3)$ on $\R^3$
is absolutely continuous with respect to Lebesgue measure, the Portmanteau theorem
implies that
\begin{align*}
\lim_{n \to \infty} \Prob(\chi^2 > q_{1-\alpha}, C_n^{(0)} > C_n^{(1)})
= \Prob\big(\text{\eqref{eq:chi^2 > q_1-alpha in terms of Gs} holds and }-2 p_\theta G_1 + G_2 - G_3 > d_\infty \tfrac{p_1\!-\!p_0}{2 m^\eta}\big).
\end{align*}
In the given situation (with $d_\infty = \frac12$), we have used a Monte Carlo simulation (with $2 \cdot 10^6$ iterations)
to estimate the power of the test resulting in an estimate of $0.001933\ldots$
Further, it is already immediate that the power tends to $0$ as $d_\infty$ tends to $\infty$
since $p_1 > p_0$.
To get a better quantitative picture, we have performed Monte Carlo simulations
for 200 equidistant values of $d_\infty$ between $0$ and $1$, each with $2 \cdot 10^6$ iterations.
The results of the Monte Carlo simulations are displayed in Figure \ref{fig:false_negatives}.
\begin{figure}[h]
\begin{center}
\includegraphics[width=0.5\textwidth]{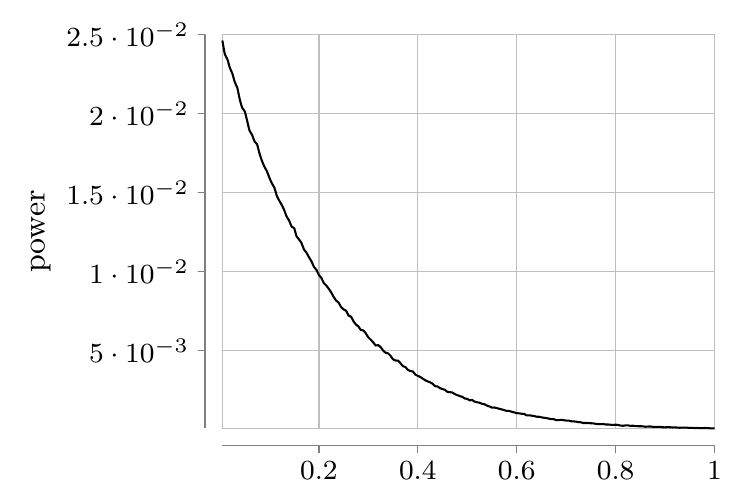}
\caption{Monte Carlo simulation of the power of the test as a function of $d_\infty$ with all other parameters fixed as above.}
\label{fig:false_negatives}
\end{center}
\end{figure}

\noindent
In particular, for every value $d_\infty > 0$, the estimate of the power of the test is strictly smaller than $0.025 = \frac\alpha2$.
\end{example}

\subsection{Functional limit theorem for the model incorporating low inventory}	\label{subsec:FCLTs}

We deduce Theorem \ref{Thm:joint cvgce var in chi^2 statistics} from a more general result,
namely, a joint functional central limit theorem.
To formulate it, we need additional notation.

Henceforth, convergence in distribution of random elements in the Skorohod spaces
$D([0,\infty),\R^d)$ and $D((0,\infty),\R^d)$ of $\R^d$-valued, right-continuous functions with existing left limits
is with respect to the standard $J_1$-topology
and will be denoted by $\Longrightarrow$.
To distinguish between convergence in the above two spaces we adopt the following convention.
The convergence is in $D([0,\infty),\R^d)$ if the processes are written with subscript $(\cdot)_{t \geq 0}$,
whilst if the subscript is $(\cdot)_{t > 0}$, the convergence is in $D((0,\infty),\R^d)$.

Let $({\bf B}(t))_{t\geq 0}=\left((B_1(t),\ldots,B_7(t))\right)_{t\geq 0}$ be a centered $7$-dimensional Brownian motion with covariance matrix
\begin{multline}	\label{eq:covariance matrix V}
\mathbf{V}=\left(
\begin{matrix}
p_0(1\!-\!p)(1\!-\!p_0(1\!-\!p))		& -p_0p_1p(1\!-\!p)		& -p_0(1\!-\!p)p			& (1\!-\!p)(m_0^{\xi}-m^{\xi}p_0)\\
-p_0p_1p(1\!-\!p)				& pp_1(1\!-\!pp_1)		& pp_1(1\!-\!p)			& p(m_1^{\xi}-p_1m^{\xi}) \\
-p_0(1\!-\!p)p					& pp_1(1\!-\!p)			& p(1\!-\!p)			& p(m_1^{\xi}-m^{\xi}) \\
(1\!-\!p)(m_0^{\xi}-m^{\xi}p_0)		& p(m_1^{\xi}-p_1m^{\xi})	& p(m_1^{\xi}-m^{\xi})	& \sigma^2_{\xi}	\\
(1\!-\!p)(m_0^{\eta}-m^{\eta}p_0)	& p(m_1^{\eta}-p_1m^{\eta})& p(m_1^{\eta}-m^{\eta})& \rho_{\xi\eta} \\
p_0(1\!-\!p)pp_\theta				& -p_1p(1\!-\!p)p_\theta	& -p(1\!-\!p)p_\theta		& (1\!-\!p)p_\theta(m_0^{\xi}-m^{\xi}) \\ 
-p_0p(1\!-\!p)p_\theta			& pp_1(1\!-\!p)p_\theta	& p(1\!-\!p)p_\theta		& pp_\theta(m_1^{\xi}-m^{\xi}) \\
\end{matrix}\right.\\
\left.\begin{matrix}
(1\!-\!p)(m_0^{\eta}-m^{\eta}p_0)	& p_0(1\!-\!p)pp_\theta				& -p_0p(1\!-\!p)p_\theta\\
p(m_1^{\eta}-p_1m^{\eta})			& -p_1p(1\!-\!p)p_\theta				& pp_1(1\!-\!p)p_\theta\\
p(m_1^{\eta}-m^{\eta})			& -p(1\!-\!p)p_\theta					& p(1\!-\!p)p_\theta\\
\rho_{\xi\eta}					& (1\!-\!p)p_\theta(m_0^{\xi}-m^{\xi})		& pp_\theta(m_1^{\xi}-m^{\xi})\\
\sigma_{\eta}^2					& (1\!-\!p)p_\theta(m_0^{\eta}-m^{\eta})	& pp_\theta(m_1^{\eta}-m^{\eta})\\
(1\!-\!p)p_\theta(m_0^{\eta}-m^{\eta})& p_\theta(1\!-\!p)(1\!-\!p_\theta(1\!-\!p))	& -p(1\!-\!p)p_\theta^2\\
pp_\theta(m_1^{\eta}-m^{\eta})		& -p(1\!-\!p) p_\theta^2				& pp_\theta(1\!-\!pp_\theta)\\
\end{matrix}\right),
\end{multline}
that is, ${\bf B}(t)={\bf V}^{1/2}{\bf B}^{\prime}(t)$, $t\geq 0$ where ${\bf B}^{\prime}(t)$ is a $7$-dimensional Brownian motion
with independent components each being a one-dimensional standard Brownian motion,
and ${\bf V}^{1/2}$ is the square root of the positive semi-definite matrix ${\bf V}$.

\begin{theorem}	\label{Thm:joint CLT}
Suppose that (A1) through (A5) and \eqref{eq:c_n reg var} are in force and that the limit
\begin{equation*}
c_\infty = \lim_{n\to\infty} \frac{c_n}{n} \in [0,\infty)
\end{equation*}
exists.
If $c_\infty \in [0,\frac{1}{m^{\eta}})$, then, as $n \to \infty$,
\begin{align}
&\Bigg(\frac{N^{(0)}_{\lfloor nt\rfloor}-(1-p)nt}{\sqrt{n}},\frac{N^{(1)}_{\lfloor nt\rfloor}-pnt}{\sqrt{n}},\frac{L_{\lfloor nt\rfloor}^{(0)}-(1-p)\tfrac{1}{m^\eta}(p_0 -p_\theta) c_{\lfloor nt\rfloor}-nt(1-p)p_\theta}{\sqrt{n}},	\notag	\\
&\hphantom{\Bigg(\frac{N^{(0)}_{\lfloor nt\rfloor}-(1-p)nt}{\sqrt{n}},}\frac{L_{\lfloor nt\rfloor}^{(1)} - p\tfrac{1}{m^\eta}(p_1-p_\theta)c_{\lfloor nt\rfloor}-nt pp_\theta}{\sqrt{n}}\Bigg)_{t> 0}\notag\\
&\Longrightarrow \Big(-B_3(t),B_3(t),c_\infty^{\frac12}\big(B_1\big(\tfrac{t}{m^{\eta}}\big)+(1\!-\!p)(p_\theta-p_0)(m^{\eta})^{\frac32}B_5(t)-B_6\big(\tfrac{t}{m^{\eta}}\big)\big)+B_6(t),\notag\\ 
&\hphantom{\Longrightarrow \big(-B_3(t),B_3(t),}c_\infty^{\frac12}\big(B_2\big(\tfrac{t}{m^{\eta}}\big)+p(p_\theta-p_1)(m^{\eta})^{\frac32}B_5(t)-B_7\big(\tfrac{t}{m^{\eta}}\big)\big)+B_7(t)\Big)_{\! t > 0}\!\!.	\label{eq:joint FCLT small c_n}
\end{align}
On the other hand, if $c_{\infty}>\frac1{m^{\eta}}$, then, as $n \to \infty$,
\begin{align}
\Bigg(\frac{N^{(0)}_{\lfloor nt\rfloor}-(1-p)nt}{\sqrt{n}},\frac{N^{(1)}_{\lfloor nt\rfloor}-pnt}{\sqrt{n}},\frac{L_{\lfloor nt\rfloor}^{(0)}-(1-p)p_0 nt}{\sqrt{n}},\frac{L_{\lfloor nt\rfloor}^{(1)}	- pp_1 nt}{\sqrt{n}}\Bigg)_{t> 0}	\notag	\\
\Longrightarrow \left(-B_3(t),B_3(t),B_1(t),B_2(t)\right)_{t > 0}.	\label{eq:joint FCLT large c_n}
\end{align}
\end{theorem}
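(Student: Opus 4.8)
The plan is to reduce everything to a single multivariate functional central limit theorem for partial sums of i.i.d.\ vectors, and then to insert the boundary (sell-out) behavior through a first-passage analysis. First I would isolate the relevant one-step increments by introducing the $\R^7$-valued i.i.d.\ random vectors
\[
\mathbf{W}_k \defeq \big(\1_{\{\xi_k+\eta_k>0,\,I_k=0\}},\,\1_{\{\xi_k+\eta_k>0,\,I_k=1\}},\,\1_{\{I_k=1\}},\,\xi_k,\,\eta_k,\,\1_{\{\theta_k>0,\,I_k=0\}},\,\1_{\{\theta_k>0,\,I_k=1\}}\big).
\]
Their common mean is $\big((1\!-\!p)p_0,pp_1,p,m^{\xi},m^{\eta},(1\!-\!p)p_\theta,pp_\theta\big)$, and, as one checks entrywise (routinely, using that the two group indicators are mutually exclusive and that $\theta_k$ is independent of $(\xi_k,\eta_k,I_k)$), their common covariance matrix is exactly $\mathbf{V}$ from \eqref{eq:covariance matrix V}. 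By (A4) the $\mathbf{W}_k$ have finite second moments, so the multidimensional Donsker theorem gives $\big(\tfrac1{\sqrt n}\sum_{k\le\lfloor nt\rfloor}(\mathbf{W}_k-\E\mathbf{W}_1)\big)_{t\ge0}\Longrightarrow\mathbf{B}$, jointly in all seven coordinates, with $\mathbf{B}$ the Brownian motion of covariance $\mathbf{V}$. This single convergence is the source of all the $B_i$ in the statement.

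Next I would introduce the sell-out time $\tau_n\defeq\inf\{k:T_k=c_n\}$ and observe that, since $T_k=\sum_{j\le k}\eta_j$ holds exactly up to the step at which the upper boundary is first reached, $\tau_n$ coincides, up to an $O_{\Prob}(1)$ number of steps caused by the $J_k$-coin and the capping at the crossing step, with the first-passage time of the $\eta$-walk above level $c_n$. The point of this identification is the elementary dichotomy: for $k<\tau_n$ the $k$-th customer buys according to $\mu_{I_k}$, so $Z_k=\1_{\{\xi_k+\eta_k>0\}}$, whereas for $k>\tau_n$ good $2$ is gone and $Z_k=\1_{\{\theta_k>0\}}$. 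In the no-sell-out regime $c_\infty>\tfrac1{m^{\eta}}$ one shows $\Prob(\tau_n\le n)\to0$ (good $2$ is a.s.\ not exhausted before time $n$, by the strong law applied to $T_n$), so on an event of probability tending to one $\bR$ agrees with the free walk on $[0,n]$; then $N^{(i)}_{\lfloor nt\rfloor}$ and $L^{(i)}_{\lfloor nt\rfloor}$ are literally partial sums of coordinates of $\mathbf{W}$, and \eqref{eq:joint FCLT large c_n} follows from the Donsker convergence and the continuous mapping theorem, reading off $B_1,B_2,B_3$.

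The substantial case is the sell-out regime $c_\infty\in[0,\tfrac1{m^{\eta}})$, where $\tau_n/n$ converges a.s.\ to a constant $t^\ast\in[0,1)$ and, for every fixed $t>t^\ast$, I would decompose
\[
L^{(0)}_{\lfloor nt\rfloor}=\sum_{k\le\tau_n}\1_{\{\xi_k+\eta_k>0,\,I_k=0\}}+\sum_{\tau_n<k\le\lfloor nt\rfloor}\1_{\{\theta_k>0,\,I_k=0\}}+O(1),
\]
and symmetrically for $L^{(1)}$, while $N^{(i)}$ is unaffected by the boundary. The pre-sell-out sum is a coordinate of $\sum_{k\le\tau_n}\mathbf{W}_k$ evaluated at the \emph{random} time $\tau_n$, so the key step is a joint first-passage functional limit theorem: the generalized inverse of the $\eta$-walk satisfies, by the standard inverse-map/first-passage CLT, a functional limit theorem whose time is rescaled by $1/m^{\eta}$ and whose fluctuation is $-\tfrac1{m^{\eta}}B_5$, \emph{jointly} with the Donsker limit of $\mathbf{W}$. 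Composing the two limits produces precisely the features of \eqref{eq:joint FCLT small c_n}: the time change $t\mapsto t/m^{\eta}$ inside $B_1,B_6$ (and $B_2,B_7$); the coupling term $(1\!-\!p)(p_\theta-p_0)(m^{\eta})^{3/2}B_5$, arising from the $\sqrt n$-scale fluctuation of $\tau_n$ fed through the drift gap $p_0-p_\theta$; and the prefactor $c_\infty^{1/2}$, reflecting that these first-passage fluctuations live on the scale $\sqrt{c_n}\sim c_\infty^{1/2}\sqrt n$. The post-sell-out part contributes the remaining $B_6(t)$ and $B_7(t)$, and the deterministic centering $\tfrac1{m^\eta}(p_i-p_\theta)c_{\lfloor nt\rfloor}$ is chosen to absorb the purchases accrued at the elevated rate $(1\!-\!p)(p_0-p_\theta)$ during the pre-sell-out phase (at $t=1$ this matches the drift in Theorem~\ref{Thm:joint cvgce var in chi^2 statistics}).

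I expect the main obstacle to be this composition step, that is, making the random time change rigorous in the Skorohod space: composition is not jointly continuous in the $J_1$-topology, so one must exploit that the limiting first-passage clock is a.s.\ continuous (or invoke a Whitt-type composition theorem, or argue via the Skorohod representation), and one must establish the \emph{joint} convergence of $\big(\tfrac1{\sqrt n}\sum_{k\le\lfloor nt\rfloor}(\mathbf{W}_k-\E\mathbf{W}_1),\,\tau_n\big)$ rather than the two marginal convergences. Two further technical points must be discharged: the negligibility, on the $\sqrt n$-scale, of the single crossing step together with the geometrically many non-purchasing ``stay-put'' attempts governed by $J_k$; and the upgrade from finite-dimensional convergence to functional convergence on $D((0,\infty),\R^4)$ via a tightness estimate, the restriction to $t>0$ being exactly what allows one to stay away from the degenerate behavior near the origin.
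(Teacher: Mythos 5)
Your plan coincides essentially step for step with the paper's own proof: the same seven-dimensional Donsker theorem with covariance $\mathbf{V}$ (Proposition \ref{prop:joint_convergence_direct}), the same first-passage FCLT for the sell-out time made \emph{joint} with the partial-sum limit via Whitt-type composition and addition theorems, the same decomposition of $L^{(i)}$ into pre- and post-sell-out sums with a negligible crossing window, and the same reduction to the free walk when $c_\infty>\tfrac1{m^\eta}$. The only deviation is cosmetic: where you estimate the crossing window by an $O_{\Prob}(1)$ overshoot heuristic at a fixed time, the paper proves the uniform-in-$t$ bound $\sup_{t\in[0,T]}|\tau_2(nt)-\tau_1(nt)|=o_{\Prob}(\sqrt{c_n})$ (Lemma \ref{Lem:tau_2-tau_1->0 uniformly in probab}), which is exactly the form needed for the functional statement and is the uniformity issue you yourself flag as remaining to be discharged.
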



\section{Proofs}

In the model, there are two natural breaks, namely,
first the process evolves in the positive quadrant like
an unrestricted two-dimensional random walk until the second coordinate of the walk
for the first time attempts to step to or beyond the level $c_n$.
This time we call $\tau_1(n)$.
Then there is a number of attempts to reach that border
until this is eventually achieved at a time we call $\tau_2(n)$.
From that time on, the walk keeps the second coordinate fixed at $c_n$
and evolves horizontally as a one-dimensional walk.
Crucial both for the proof of the strong law of large numbers, Theorem \ref{Thm:SLLN L_n's and N_n's},
and the joint functional limit theorem, Theorem \ref{Thm:joint CLT},
is a sufficient understanding of these times,
$\tau_1(n)$ and $\tau_2(n)$.

\subsection{Stopping time analysis and the strong law of large numbers}	\label{subsec:stopping time analysis}

Formally, the two natural stopping times associated with the stochastic process $(\bR_k)_{k \in \N_0}$
are defined as follows:
\begin{itemize}
	\item $\tau_1(n)\defeq\inf\{k \in \N: T_{k-1}+\eta_k\geq c_n\}$,
		the first attempt to reach the half-plane $\N_0 \times [c_n,\infty)$, and
	\item $\tau_2(n)\defeq\inf\{k \in \N: T_k=c_n\}$,
		the first entrance to the horizontal line $\N_0 \times \{c_n\}$.
\end{itemize}
We stipulate that $\tau_1(0)=\tau_2(0)=0$.
To simplify notation later on, we set $\tau_j(t) \defeq \tau_j(\lfloor t \rfloor)$
for $t \geq 0$ and $j=1,2$.
The above quantities are stopping times with respect to the natural filtration of $(\bR_k)_{k \in \N_0}$.
By the strong law of large numbers, we have
\begin{equation}	\label{eq:SLLN tau_1(n)}
\frac{\tau_1(n)}{c_{n}}\to \frac{1}{\E[\eta]} = \frac1{m^\eta} \quad \text{as } n\to\infty\quad\text{a.\,s.}
\end{equation}
Note that $\tau_1(n) \leq \tau_2(n)$ and
\begin{equation*}
\{\tau_1(n)=\tau_2(n)\}=\{T_{\tau_1(n)-1}+\eta_{\tau_1(n)}=c_n\}\cup\{T_{\tau_1(n)-1}+\eta_{\tau_1(n)}>c_n,J_{\tau_1(n)}=1\}.
\end{equation*}
We start by proving that $\tau_1(n)$ and $\tau_2(n)$ are uniformly close on the $\sqrt{c_n}$-scale.

\begin{lemma}	\label{Lem:tau_2-tau_1->0 uniformly in probab}
For arbitrary $T>0$ we have
\begin{equation*}
\sup_{t\in [0,\,T]} \frac{|\tau_2(nt)-\tau_1(nt)|}{\sqrt{c_n}}\Probto 0\quad \text{as } n \to \infty.
\end{equation*}
\end{lemma}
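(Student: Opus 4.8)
The plan is to use that $\tau_1(nt)\le\tau_2(nt)$ always, so the absolute value may be dropped, and that both stopping times are piecewise constant in $t$; hence it suffices to show $\max_{0\le m\le\lfloor nT\rfloor}\bigl(\tau_2(m)-\tau_1(m)\bigr)=o_P(\sqrt{c_n})$. The geometric idea is that, since $\eta\ge0$, the second coordinate of the walk before the first boundary crossing is the non-decreasing partial-sum walk $\tilde T_k:=\eta_1+\dots+\eta_k$, and $\tau_1(m)$ is exactly its first passage over the level $c_m$. A strictly positive difference $\tau_2(m)-\tau_1(m)$ arises only when the step at $\tau_1(m)$ strictly overshoots $c_m$ and $J_{\tau_1(m)}=0$; in that case the walk rests at height $\tilde T_{\tau_1(m)-1}$ with remaining gap $g_m:=c_m-\tilde T_{\tau_1(m)-1}\le\eta_{\tau_1(m)}$, and $\tau_2(m)-\tau_1(m)$ is the number of further steps needed to climb this gap.

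First I would establish, for each fixed $m$, a stochastic domination. From the height reached after the failed attempt the second coordinate is non-decreasing and, at every step, either advances or completes the crossing, or stays put. Because $\mu_i(\{(0,1)\}),\mu_i(\{(1,1)\})>0$ we have $\delta_0:=\Prob(\eta=1)>0$, and a step with $\eta=1$ always lowers the gap by one or completes the crossing; thus each step is a ``success'' with probability at least $\delta_0$, and at most $g_m$ successes are required. Using the strong Markov property at $\tau_1(m)$ to replace the post-$\tau_1(m)$ inter-success waiting times by independent $\mathrm{Geom}(\delta_0)$ variables $G_1,G_2,\dots$, this yields the coupling
\begin{equation*}
\tau_2(m)-\tau_1(m)\ \le\ \sum_{j=1}^{\eta_{\tau_1(m)}}G_j
\end{equation*}
with the empty sum read as $0$.

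Next I would control both ingredients uniformly in $m\in\{0,\dots,\lfloor nT\rfloor\}$. The decisive input is that $\E[\eta^2]<\infty$ forces $x^2\Prob(\eta>x)\to0$, whence the maximum of $N$ i.i.d.\ copies of $\eta$ is $o_P(\sqrt N)$; together with \eqref{eq:SLLN tau_1(n)} and $c_{\lfloor nT\rfloor}=O(c_n)$ from \eqref{eq:c_n reg var} (which give $\tau_1(\lfloor nT\rfloor)=O_P(c_n)$) this implies $K_n:=\max_{0\le m\le\lfloor nT\rfloor}\eta_{\tau_1(m)}=o_P(\sqrt{c_n})$, since all the indices $\tau_1(m)$ lie below $\tau_1(\lfloor nT\rfloor)$. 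Fixing $\epsilon>0$ and setting $a_n:=\lceil\tfrac12\delta_0\epsilon\sqrt{c_n}\rceil$, I would split on $\{K_n\le a_n\}$, whose complement is negligible by the previous line. On $\{K_n\le a_n\}$ the coupling bounds each $\tau_2(m)-\tau_1(m)$ stochastically by $\sum_{j=1}^{a_n}G_j$, a sum of geometrics with mean $a_n/\delta_0=\tfrac12\epsilon\sqrt{c_n}$, so a Chernoff estimate gives $\Prob\bigl(\sum_{j=1}^{a_n}G_j>\epsilon\sqrt{c_n}\bigr)\le e^{-c'\sqrt{c_n}}$ for some $c'>0$. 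As $\tau_2(m)-\tau_1(m)$ depends on $m$ only through $c_m$, a union bound over the at most $c_{\lfloor nT\rfloor}+1=O(c_n)$ distinct levels closes the argument because $c_n\,e^{-c'\sqrt{c_n}}\to0$.

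The main obstacle is genuinely the uniformity: a single crossing produces only an $O_P(1)$ difference, but a maximum over the order-$c_n$ many distinct levels could a priori be of order $\sqrt{c_n}$. What rescues the bound is precisely the finite-variance phenomenon $\max_{i\le N}\eta_i=o_P(\sqrt N)$ --- the largest overshoot among $O(c_n)$ crossings would be of the critical order $\sqrt{c_n}$ only if $\eta$ had infinite variance --- combined with the exponential concentration of the geometric climbing times, which lets a crude union bound over all levels survive. Some care is still needed to phrase the domination with truly independent geometric increments (via the strong Markov property) and to absorb the degenerate cases $\tau_2(m)=\tau_1(m)$ and $q=0$ into the same estimate; these become routine once the uniform success probability $\delta_0$ has been isolated.
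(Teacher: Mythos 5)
Your proof is correct, and while its skeleton coincides with the paper's (bound the residual gap at the first attempted crossing by the overshoot $\eta_{\tau_1(m)}$, use $\E[\eta^2]<\infty$ to get $\max_{k\leq O(c_n)}\eta_k=o_{\Prob}(\sqrt{c_n})$, then beat a union bound of size $O(c_n)$ by an exponentially small tail for the climbing time), the mechanism you use for the climb is genuinely different. The paper bounds $\tau_2(m)-\tau_1(m)$ by the first passage of the thinned sum $\widetilde{S}_k=\sum_{j\le k}\eta_j\1_{\{J_j=1\}}$ over the gap, i.e.\ it only credits progress to steps with $J=1$ (which by (A5) either fit or are truncated at the boundary), obtains uniformity in $t$ without any Markov-property argument by taking the minimum of the increments $\widetilde{S}_{m+\lfloor\varepsilon\sqrt{c_n}\rfloor}-\widetilde{S}_m$ over \emph{all} shifts $m\leq\lfloor ac_n\rfloor$, and closes with an exponential Markov inequality valid for $\delta\in(0,\varepsilon q m^{\eta})$. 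You instead exploit the support assumption in (A2): since $\mu_i(\{(0,1)\})>0$, a step with $\eta_k=1$ is \emph{always} performed while the gap is positive, so the climb is dominated, via the strong Markov property at $\tau_1(m)$, by a sum of at most $\eta_{\tau_1(m)}$ i.i.d.\ $\mathrm{Geom}(\delta_0)$ waiting times, and you restore uniformity by a union bound over the at most $c_{\lfloor nT\rfloor}+1$ distinct levels (legitimate, since under the natural coupling $\tau_2(m)-\tau_1(m)$ depends on $m$ only through $c_m$). What each approach buys: the paper's shift-minimum trick is slicker in that it never needs the strong Markov property or the counting of distinct levels; your geometric-domination argument, on the other hand, is insensitive to $q$ and in particular covers the boundary case $q=0$ admitted by (A1), where the paper's Chernoff step degenerates (the interval $(0,\varepsilon q m^{\eta})$ is empty and $\E[e^{-\lambda\eta\1_{\{J=1\}}}]=1$), so your proof actually establishes the lemma in slightly greater generality than the argument given in the paper.
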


\begin{proof}
By the regular variation of $(c_n)_{n \in \N}$ it is enough to prove the claim for $T=1$. 
We have the following bound
\begin{align}\label{eq:tau_difference_proof0}
\tau_2(n)-\tau_1(n)
&=\inf\Bigg\{k\geq 0 : T_{\tau_1(n)}+\sum_{j=1}^{k}Y_{\tau_{1}(n)+j}=c_n\Bigg\}	\notag	\\
&\leq \inf\Bigg\{k\geq 0 : T_{\tau_1(n)}+\sum_{j=1}^{k}\eta_{\tau_{1}(n)+j}\1_{\{J_{\tau_{1}(n)+j}=1\}}\geq c_n\Bigg\},
\end{align}
and therefore 
\begin{multline}	\label{eq:tau_difference_proof1}
\sup_{t\in [0,1]}|\tau_2(nt)-\tau_1(nt)|	\\
\leq \inf\Bigg\{k\geq 0 : \inf_{t\in [0,1] }\sum_{j=1}^{k}\eta_{\tau_{1}(\lfloor nt \rfloor)+j}\1_{\{J_{\tau_{1}(\lfloor nt \rfloor)+j}=1\}}\geq \sup_{t\in [n^{-1},1]}\left(c_{\lfloor nt \rfloor}-T_{\tau_1(nt)}\right)\Bigg\}.
\end{multline}
Furthermore, $c_m-T_{\tau_1(m)}\leq \eta_{\tau_1(m)}$ and thus
\begin{equation*}
\sup_{t\in [n^{-1},1]}\left(c_{\lfloor nt\rfloor}-T_{\tau_1(nt)}\right)\leq \max_{t\in [n^{-1},1]}\eta_{\tau_1(nt)}\leq \max_{m=1,\ldots,\tau_1(n)}\eta_m,
\end{equation*}
which in view of \eqref{eq:tau_difference_proof1} yields 
\begin{equation}	\label{eq:tau_difference_proof11}
\sup_{t\in [0,1]}|\tau_2(nt)-\tau_1(nt)|
\leq \inf\Bigg\{k\geq 0 : \min_{m=0,\ldots,\tau_1(n)}\sum_{j=1}^{k}\eta_{m+j}\1_{\{J_{m+j}=1\}}
\geq \max_{m=1,\ldots,\tau_1( n)}\eta_m\Bigg\}.
\end{equation}
From \eqref{eq:SLLN tau_1(n)} we conclude that
for any $a>\tfrac{1}{m^\eta}$ there exists a random $n_0\in\N$ such that for all $n \geq n_0$, we have
\begin{equation*}
\sup_{t\in [n^{-1},1]}\left(c_{\lfloor nt\rfloor}-T_{\tau_1(nt)}\right)
\leq \max_{k=1,\ldots,\lfloor a c_{n}\rfloor }\eta_k.
\end{equation*}
This further implies
\begin{equation}\label{eq:tau_difference_proof13}
\sup_{t\in [0,1]}|\tau_2(nt)-\tau_1(nt)|
\leq \inf\bigg\{k\geq 0 : \min_{m=0,\ldots,\lfloor a c_n\rfloor}\sum_{j=1}^{k}\eta_{m+j}\1_{\{J_{m+j}=1\}}\geq \max_{m=0,\ldots,\lfloor a c_n\rfloor}\eta_m\bigg\}
\end{equation}
if $n \geq n_0$.
Using the assumption $\E[\eta^2]<\infty$ it is not difficult to check that
\begin{equation}	\label{eq:tau_difference_proof2}
c_n^{-\frac12} \max_{k=1,\ldots,\lfloor a c_n\rfloor }\eta_k \Probto 0	\quad \text{as } n \to \infty.
\end{equation}
Define
\begin{equation*}
\widetilde{S}_0 \defeq 0
\quad	\text{and}	\quad
\widetilde{S}_n=\sum_{k=1}^{n}\eta_k\1_{\{J_k=1\}},\ n \in \N
\end{equation*}
and note that by \eqref{eq:tau_difference_proof13} and \eqref{eq:tau_difference_proof2}
it is enough to prove that for arbitrary $\varepsilon>0$ there exist $\delta>0$ such
\begin{equation}\label{eq:tau_difference_proof3}
\Prob\bigg(\min_{m=1,\ldots,\lfloor a c_n\rfloor}
(\widetilde{S}_{m+\lfloor \varepsilon  \sqrt{c_n}\rfloor}-\widetilde{S}_m)\geq \delta  \sqrt{c_n}\bigg) \to 1
\quad	\text{as } n \to \infty.
\end{equation}
Fix $\varepsilon>0$ and let us show that \eqref{eq:tau_difference_proof3}
holds for any $\delta\in (0,\varepsilon \E \widetilde{S}_1)=(0,\varepsilon q m^{\eta})$. To this end, fix arbitrary such $\delta$ and write 
\begin{align*}
\Prob\Big(\min_{m=1,\ldots,\lfloor a c_n\rfloor}(\widetilde{S}_{m+\lfloor \varepsilon  \sqrt{c_n}\rfloor}-\widetilde{S}_m)
< \delta  \sqrt{c_n}\Big)
&\leq \sum_{m=1}^{\lfloor a c_n\rfloor}\Prob\Big(\widetilde{S}_{m+\lfloor \varepsilon  \sqrt{c_n}\rfloor}-\widetilde{S}_m
< \delta  \sqrt{c_n}\Big)\\
&=\lfloor a c_n\rfloor \Prob\Big(\widetilde{S}_{\lfloor \varepsilon  \sqrt{c_n}\rfloor} < \delta  \sqrt{c_n}\Big).
\end{align*}
For every $\lambda>0$ we have by Markov's inequality
\begin{equation*}
\Prob\Big(\widetilde{S}_{\lfloor \varepsilon  \sqrt{c_n}\rfloor} < \delta  \sqrt{c_n}\Big)
=\Prob\Big(e^{-\lambda \widetilde{S}_{\lfloor \varepsilon  \sqrt{c_n}\rfloor}} > e^{-\lambda\delta  \sqrt{c_n}}\Big)
\leq e^{\lambda\delta  \sqrt{c_n}}(\E[e^{-\lambda \eta \1_{\{J=1\}}}])^{\lfloor \varepsilon  \sqrt{c_n}\rfloor}.
\end{equation*}
It remains to note that $e^{\lambda\delta\varepsilon^{-1}}(\E[e^{-\lambda \eta \1_{\{J=1\}}}])<1$
for $\delta\in(0,\varepsilon q m^{\eta})$ and sufficiently small $\lambda > 0$. The proof is complete.
\end{proof}

For the proof of Theorem \ref{Thm:SLLN L_n's and N_n's} we also need a counterpart of the above lemma
for convergence in the almost sure sense.

\begin{lemma}\label{lem:tau_differecne_as}
It holds that
\begin{equation*}
\lim_{n\to\infty}\frac{\tau_2(n)-\tau_1(n)}{c_n}= 0	\quad	\text{a.\,s.}
\end{equation*}
\end{lemma}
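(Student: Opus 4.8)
The plan is to upgrade the in‑probability bound underlying Lemma \ref{Lem:tau_2-tau_1->0 uniformly in probab} to an almost sure statement on the coarser scale $c_n$ by a direct Borel--Cantelli argument, exploiting that the regular variation \eqref{eq:c_n reg var} forces $c_n$ to grow polynomially. Keep the notation $\widetilde{S}_0 \defeq 0$, $\widetilde{S}_k \defeq \sum_{j=1}^k \eta_j \1_{\{J_j=1\}}$ from the preceding proof; by (A4) its increments have mean $\E[\eta\1_{\{J=1\}}] = q m^\eta > 0$. Fix $a > 1/m^\eta$ and let $n_0$ be the random index furnished by \eqref{eq:SLLN tau_1(n)}. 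Taking $t=1$ in \eqref{eq:tau_difference_proof13}, for $n \geq n_0$ one has $\tau_2(n) - \tau_1(n) \leq M_n$, where
\[
M_n \defeq \inf\Big\{k \geq 0 : \min_{m=0,\ldots,\lfloor ac_n\rfloor}(\widetilde{S}_{m+k} - \widetilde{S}_m) \geq \mathcal{M}_n\Big\}, \qquad \mathcal{M}_n \defeq \max_{m=0,\ldots,\lfloor ac_n\rfloor}\eta_m .
\]
Since $n_0 < \infty$ a.s., it suffices to prove that $M_n/c_n \to 0$ almost surely.

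I would fix $\varepsilon > 0$ and choose $\gamma \in (0, \varepsilon q m^\eta)$. On $\{M_n > \lfloor \varepsilon c_n\rfloor\}$ the worst partial sum up to step $\lfloor \varepsilon c_n\rfloor$ falls short of $\mathcal{M}_n$, so
\[
\{M_n > \lfloor\varepsilon c_n\rfloor\} \subseteq \{\mathcal{M}_n > \gamma c_n\} \cup \Big\{\min_{m \leq \lfloor ac_n\rfloor}(\widetilde{S}_{m+\lfloor\varepsilon c_n\rfloor} - \widetilde{S}_m) < \gamma c_n\Big\}.
\]
Because $\E[\eta] < \infty$, a standard Borel--Cantelli argument gives $\max_{m\leq N}\eta_m/N \to 0$ a.s., whence $\mathcal{M}_n/c_n \to 0$ a.s.; thus the first event occurs only finitely often almost surely.

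For the second event I would reuse the exponential estimate from the proof of Lemma \ref{Lem:tau_2-tau_1->0 uniformly in probab}, now on the $c_n$-scale: by a union bound over the $\lfloor ac_n\rfloor+1$ shifts and Markov's inequality,
\[
\Prob\Big(\min_{m\leq\lfloor ac_n\rfloor}(\widetilde{S}_{m+\lfloor\varepsilon c_n\rfloor} - \widetilde{S}_m) < \gamma c_n\Big) \leq (\lfloor ac_n\rfloor + 1)\, e^{\lambda\gamma c_n}\big(\E[e^{-\lambda\eta\1_{\{J=1\}}}]\big)^{\lfloor\varepsilon c_n\rfloor}.
\]
As $\gamma/\varepsilon < q m^\eta = \E[\eta\1_{\{J=1\}}]$, the map $\lambda \mapsto e^{\lambda\gamma/\varepsilon}\E[e^{-\lambda\eta\1_{\{J=1\}}}]$ equals $1$ with negative derivative at $\lambda = 0$, so for small $\lambda > 0$ there is $r \in (0,1)$ bounding the right-hand side by $(\lfloor ac_n\rfloor+1)\, r^{c_n}$. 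The decisive point is summability: by \eqref{eq:c_n reg var} with $\rho > 0$ one has $c_n \geq n^{\rho/2}$ for all large $n$, so $r^{c_n}$ decays at stretched-exponential rate and $\sum_n (\lfloor ac_n\rfloor + 1)\, r^{c_n} < \infty$. Borel--Cantelli then shows the second event too occurs only finitely often a.s.

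Combining the two, almost surely $M_n \leq \lfloor\varepsilon c_n\rfloor$ for all large $n$, i.e.\ $\limsup_{n}M_n/c_n \leq \varepsilon$; letting $\varepsilon$ run through a null sequence yields $M_n/c_n \to 0$ a.s.\ and hence the claim. The only genuinely new ingredient beyond Lemma \ref{Lem:tau_2-tau_1->0 uniformly in probab} is this summability step, and it is precisely where the regular variation of $(c_n)$ enters: polynomial growth of $c_n$ makes the geometric‑in‑$c_n$ tail bound summable in $n$ and lets Borel--Cantelli run directly, with no subsequence argument needed. The Chernoff estimate and the control of $\mathcal{M}_n$ are routine, so I expect the mild crux to be verifying $\sum_n c_n r^{c_n} < \infty$.
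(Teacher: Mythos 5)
Your proof is correct, but it takes a genuinely different route from the paper's. The paper's own argument is purely pathwise and much shorter: starting from \eqref{eq:tau_difference_proof0}, it observes that $\{\tau_2(n)-\tau_1(n)>\lfloor\varepsilon c_n\rfloor\}\subseteq\{\widetilde{S}_{\tau_1(n)+\lfloor\varepsilon c_n\rfloor}-\widetilde{S}_{\tau_1(n)}<c_n-T_{\tau_1(n)}\}$, and then applies the classical strong law for $\widetilde{S}$ together with \eqref{eq:SLLN tau_1(n)} to get $(\widetilde{S}_{\tau_1(n)+\lfloor\varepsilon c_n\rfloor}-\widetilde{S}_{\tau_1(n)})/c_n\to\varepsilon q m^{\eta}>0$ and $T_{\tau_1(n)}/c_n\to 1$ a.s., so the event occurs only finitely often --- no Borel--Cantelli, no exponential bounds, no union over shifts, and no growth condition on $c_n$ beyond $c_n\to\infty$. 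You instead keep the shift-uniform bound \eqref{eq:tau_difference_proof13} and upgrade the Chernoff estimate from the proof of Lemma \ref{Lem:tau_2-tau_1->0 uniformly in probab} to an almost sure statement; your steps all check out: the inclusion at $k=\lfloor\varepsilon c_n\rfloor$ is valid (indeed $\widetilde{S}$ is nondecreasing, and in any case $M_n>k_0$ means the defining condition fails at $k_0$), $\E[\eta]<\infty$ gives $\mathcal{M}_n/c_n\to 0$ a.s., the derivative of $\lambda\mapsto e^{\lambda\gamma/\varepsilon}\E[e^{-\lambda\eta\1_{\{J=1\}}}]$ at $\lambda=0$ is $\gamma/\varepsilon-qm^{\eta}<0$ by dominated convergence, and the decisive summability $\sum_n(\lfloor ac_n\rfloor+1)r^{c_n}<\infty$ does follow since regular variation with index $\rho>0$ forces $c_n\geq n^{\rho/2}$ eventually. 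The trade-off: your argument is quantitative (stretched-exponential tail bounds) and, since \eqref{eq:tau_difference_proof13} is uniform in $t$, it actually delivers the stronger conclusion $\sup_{t\in[0,1]}|\tau_2(nt)-\tau_1(nt)|/c_n\to 0$ a.s., whereas the paper's argument is more elementary and works without polynomial growth of $c_n$; note also the shared (and harmless, since inherited from the paper's own Chernoff step) caveat that both estimates implicitly require $q>0$ so that the interval $(0,\varepsilon qm^{\eta})$ is nonempty.
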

\begin{proof}
In view of \eqref{eq:tau_difference_proof0} for every $\varepsilon>0$ it holds
\begin{align}	
\{\tau_2(n)-\tau_1(n) > \lfloor \varepsilon c_n\rfloor\}
&\subseteq
\Bigg\{\sum_{j=1}^{\lfloor \varepsilon c_n\rfloor}\eta_{\tau_{1}(n)+j}\1_{\{J_{\tau_{1}(n)+j}=1\}}< c_n-T_{\tau_1(n)}\Bigg\}	\notag	\\
&=\Big\{\widetilde{S}_{\tau_1(n)+\lfloor \varepsilon c_n\rfloor}-\widetilde{S}_{\tau_1(n)}< c_n-T_{\tau_1(n)}\Big\}.\label{eq:tau_difference_proof_as1} 
\end{align}
By the classical strong law of large numbers for $(\widetilde{S}_k)_{k \in \N_0}$ and in view of \eqref{eq:SLLN tau_1(n)}
\begin{equation*}
\lim_{n\to\infty}\frac{\widetilde{S}_{\tau_1(n)+\lfloor \varepsilon c_n\rfloor}-\widetilde{S}_{\tau_1(n)}}{c_n}
= \varepsilon q m^{\eta} > 0
\quad\text{and}\quad
\lim_{n\to\infty}\frac{T_{\tau_1(n)}}{c_n}=1	\quad	\text{a.\,s.}
\end{equation*}
Therefore, with probability one, the event in \eqref{eq:tau_difference_proof_as1} occurs only for finitely many $n$.
\end{proof}

\begin{proof}[Proof of Theorem \ref{Thm:SLLN L_n's and N_n's}]
Note that for $i=0,1$ we can write
\begin{align}
L_{n}^{(i)}=\sum_{k=1}^{n}\1_{\{X_k+Y_k>0,I_k=i\}}
&=\sum_{k=1}^{(\tau_1(n)-1)\wedge n} \!\!\!\!  \1_{\{\xi_k+\eta_k>0,I_k=i\}}	\notag	\\
&\hphantom{=\ }
+\sum_{k=\tau_1(n)}^{\tau_2(n)\wedge n} \!\! \1_{\{X_k+Y_k>0,I_k=i\} }
+\sum_{k=\tau_2(n)+1}^{n} \!\!\!  \1_{\{\theta_k>0,I_k=i\}}.		\label{eq:decompostion of L_n^i}
\end{align}
From this representation all the claims follow
immediately from the classical strong law of large numbers and the fact
\begin{equation*}
\lim_{n\to\infty}\frac{\tau_1(n)}{c_n}=\lim_{n\to\infty}\frac{\tau_2(n)}{c_n}=\frac{1}{m^{\eta}}\quad \text{a.\,s.},
\end{equation*}
which is a consequence of \eqref{eq:SLLN tau_1(n)} and Lemma \ref{lem:tau_differecne_as}.
\end{proof}

\subsection{Proof of Theorem \ref{Thm:joint CLT}}

Define $X_n\in D([0,\infty),\R^5)$ and $Y_n\in D([0,\infty),\R^2)$ via
\begin{align*}
X_n(t)
&= \sum_{k=1}^{\lfloor nt \rfloor}\left(\1_{\{\xi_k+\eta_k>0,I_k=0\}},\1_{\{\xi_k+\eta_k>0,I_k=1\}} ,I_k,\xi_k,\eta_k\right)\\
\text{and}	\quad
Y_n(t)
&= \sum_{k=1 }^{\lfloor nt \rfloor}\left(\1_{\{\theta_k>0,I_k=0\}},\1_{\{\theta_k>0,I_k=1\}}\right)
\end{align*}
for $t \geq 0$.
The following proposition is the key ingredient in the proof of Theorem \ref{Thm:joint CLT}.

\begin{proposition}\label{prop:joint_convergence_direct}
If the assumptions of Theorem \ref{Thm:joint CLT} hold, then, as $n\to\infty$,
\begin{equation*}
\bigg(\frac{X_n(t)-nt\big((1\!-\!p)p_0,pp_1,p,m^{\xi},m^{\eta}\big)}{\sqrt{n}},
\frac{Y_n(t)-nt\big((1\!-\!p)p_\theta,pp_\theta\big)}{\sqrt{n}}\bigg)_{\! t\geq 0}
\! \Longrightarrow ({\bf B}(t))_{t\geq 0},
\end{equation*}
where $({\bf B}(t))_{t\geq 0}$ is a centered Brownian motion with covariance matrix $\mathbf{V}$ as in \eqref{eq:covariance matrix V}.
\end{proposition}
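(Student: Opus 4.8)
The plan is to recognize the pair $(X_n,Y_n)$ as the centered and $\sqrt{n}$-rescaled partial-sum process of a single i.i.d.\ sequence of $\R^7$-valued vectors and then to invoke the multidimensional functional central limit theorem. Concretely, for $k \in \N$ I would set
\begin{equation*}
W_k \defeq \big(\1_{\{\xi_k+\eta_k>0,I_k=0\}},\1_{\{\xi_k+\eta_k>0,I_k=1\}},I_k,\xi_k,\eta_k,\1_{\{\theta_k>0,I_k=0\}},\1_{\{\theta_k>0,I_k=1\}}\big),
\end{equation*}
so that the process inside the proposition is exactly $\big(\tfrac1{\sqrt n}\sum_{k=1}^{\lfloor nt\rfloor}(W_k-\E[W_1])\big)_{t\geq 0}$, with the first five coordinates forming $X_n$ and the last two forming $Y_n$. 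The crucial structural observation is that only the \emph{unconstrained} quantities $\xi_k,\eta_k,\theta_k,I_k$ enter here; neither the preference variables $J_k$ nor the boundary-clipped increments $(X_k,Y_k)$ from (A5) appear, so no inventory constraint is present and the process really is a genuine random walk. Each $W_k$ is a fixed measurable function of $(I_k,\xi_k,\eta_k,\theta_k)$, and by (A1)--(A4) these driving vectors are i.i.d.\ in $k$ (the dependence of $(\xi_k,\eta_k)$ on $I_k$ via the conditional laws $\mu_i$ is confined to a single index); hence $(W_k)_{k\in\N}$ is i.i.d., and by (A4) it has finite second moments.

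First I would identify the centering. A direct computation from the conditional laws in (A2), together with $\E[I]=p$ and the independence of $\theta$ from $(I,\xi,\eta)$, gives
\begin{equation*}
\E[W_1] = \big((1-p)p_0,\,pp_1,\,p,\,m^\xi,\,m^\eta,\,(1-p)p_\theta,\,pp_\theta\big),
\end{equation*}
which matches precisely the deterministic shifts subtracted in the statement. Next I would compute $\Cov(W_1)$ entry by entry and verify that it equals the matrix $\mathbf{V}$ of \eqref{eq:covariance matrix V}. Many entries collapse immediately: products of the two group indicators vanish since $\{I=0\}\cap\{I=1\}=\emptyset$; one has $\xi\,\1_{\{\xi+\eta>0\}}=\xi$ and $\eta\,\1_{\{\xi+\eta>0\}}=\eta$, which reduces the covariances between the purchase indicators and $(\xi,\eta)$ to expressions in $m_i^\xi,m_i^\eta$; and every mixed moment involving the last two coordinates factors through the independence of $\theta$ before one subtracts the product of means.

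With mean and covariance in hand, I would apply the multidimensional invariance principle for i.i.d.\ square-integrable summands, which delivers convergence in $D([0,\infty),\R^7)$ with the $J_1$-topology of the rescaled partial-sum process to $(\mathbf{V}^{1/2}\mathbf{B}'(t))_{t\geq0}=(\mathbf{B}(t))_{t\geq0}$; here $\mathbf{V}^{1/2}$ is well defined precisely because $\mathbf{V}=\Cov(W_1)$ is automatically positive semidefinite. If one wishes to avoid quoting the multivariate statement, the finite-dimensional distributions follow from the classical central limit theorem and the Cram\'er--Wold device, while tightness reduces to the coordinatewise one-dimensional Donsker theorem \cite{Billingsley:1999}.

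The main obstacle is purely the bookkeeping in the covariance computation: one must track the couplings that (A2) induces between $I_k$ and $(\xi_k,\eta_k)$ and, through them, between $I_k$ and the purchase indicators, since these are exactly the off-diagonal blocks of $\mathbf{V}$. There is no genuine probabilistic difficulty beyond establishing that $(W_k)_{k\in\N}$ is i.i.d.\ and square-integrable, after which the result is a textbook application of the functional central limit theorem.
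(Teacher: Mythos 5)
Your proposal is correct and takes essentially the same route as the paper's own proof: both identify the process as the centered, $\sqrt{n}$-rescaled partial-sum process of i.i.d.\ square-integrable $\R^7$-valued vectors, compute the mean vector and the covariance matrix $\mathbf{V}$ via elementary moment calculations (using exactly the simplification $\xi\1_{\{\xi+\eta>0\}}=\xi$ that the paper illustrates for the $(1,4)$-entry), and conclude by Donsker's multidimensional invariance principle. Your added remarks (the Cram\'er--Wold alternative and the observation that only the unconstrained variables, not the clipped increments from (A5), appear) are harmless elaborations of the same argument.
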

\begin{proof}
The convergence follows from Donsker's invariance principle
since $(X_n(t),Y_n(t))$ is the sum of independent identically distributed random vectors in $\R^7$ with finite second moments.
The increment vectors have expectation
\begin{multline*}
\E\big[\big(\1_{\{\xi_k+\eta_k>0,I_k=0\}},\1_{\{\xi_k+\eta_k>0,I_k=1\}},
I_k,\xi_k,\eta_k,\1_{\{\theta_k>0,I_k=0\}},\1_{\{\theta_k>0,I_k=1\}}\big)\big]	\\
= \big(p_0(1-p),p_1p,p,m^\xi,m^\eta,p_\theta(1-p),p_\theta p\big).
\end{multline*}
The explicit form of the covariance matrix now results from elementary, yet cumbersome calculations.
For example the entry at the first row and fourth column of ${\bf V}$ can be calculated as follows:
\begin{align*}
\Cov\big[\1_{\{\xi_k+\eta_k>0,I_k=0\}},\xi_k\big]
&=\E\big[\xi_k\1_{\{\xi_k+\eta_k>0,I_k=0\}}\big]-m^{\xi} \Prob(\xi_k+\eta_k>0,I_k=0)	\\
&=\E \big[\xi_k\1_{\{\xi_k>0,I_k=0\}}\big] - m^{\xi} (1\!-\!p) p_0	\\
&=\E \big[\xi_k\1_{\{I_k=0\}}\big] - m^{\xi}(1\!-\!p)p_0	\\
&=(1\!-\!p)(m_0^{\xi}- m^{\xi}p_0).
\end{align*}
\end{proof}

In what follows, we shall frequently use the following two facts (see the Lemma on p.\;151 in \cite{Billingsley:1999}
and \cite[Theorem 3.1]{Whitt:1980}):
\begin{itemize}
	\item[Fact 1:]
		the addition mapping $+:D([0,\infty),\R)\times D([0,\infty),\R)\mapsto D([0,\infty))$ defined by $(f+g)(x)=f(x)+g(x)$,
		is continuous with respect to the $J_1$-topology at all points $(f,g)$ such that both $f$ and $g$ are continuous;
	\item[Fact 2:]
		the composition mapping $\circ:D([0,\infty),\R)\times D([0,\infty),\R)\mapsto D([0,\infty))$ defined by $(f\circ g)(x)=f(g(x))$
		is continuous with respect to the $J_1$-topology at all points $(f,g)$ such that both $f$ and $g$ are continuous
		and $g$ is nondecreasing.
\end{itemize}

\begin{proof}[Proof of Theorem \ref{Thm:joint CLT}]
From \cite[Corollary 7.3.1]{Whitt:2002}, \eqref{eq:c_n reg var} and Fact 2, we infer
\begin{equation}\label{eq:tau_1_convergence}
\bigg(\frac{\tau_1(nt)-\tfrac{1}{m^\eta}c_{\lfloor nt\rfloor}}{(m^{\eta})^{-3/2}\sqrt{c_n}}\bigg)_{\! t\geq 0}
\!\Longrightarrow (-B_5(t^{\rho}))_{t\geq 0},
\end{equation}
and, in view of Lemma \ref{Lem:tau_2-tau_1->0 uniformly in probab}, the same relation for $\tau_2(nt)$.
By applying Corollary 13.8.1 in \cite{Whitt:2002} 
and again Lemma \ref{Lem:tau_2-tau_1->0 uniformly in probab},
we can further extend the convergence in Proposition \ref{prop:joint_convergence_direct}
to a joint convergence:
\begin{multline}\label{eq:joint _convergence_with_nu1}
\left(\frac{X_n(t)-nt\left(p_0(1-p),pp_1,p,m^{\xi},m^{\eta}\right)}{\sqrt{n}},\frac{Y_n(t)-nt\left((1-p)p_\theta,pp_\theta\right)}{\sqrt{n}},\right.\\
\left.\frac{\tau_1(nt)-\tfrac{1}{m^\eta}c_{\lfloor nt\rfloor}}{(m^{\eta})^{-3/2}\sqrt{c_n}},\frac{\tau_2(nt)-\tfrac{1}{m^\eta}c_{\lfloor nt\rfloor}}{(m^{\eta})^{-3/2}\sqrt{c_n}}\right)_{t\geq 0}
\Longrightarrow ({\bf B}(t),-B_5(t^{\rho}),-B_5(t^{\rho}))_{t\geq 0}.
\end{multline}
As in \eqref{eq:decompostion of L_n^i}, for $i=0,1$, we can write
\begin{align*}	
L_{\lfloor nt\rfloor}^{(i)}
= \sum_{k=1}^{\lfloor nt\rfloor} \1_{\{X_k+Y_k>0,I_k=i\} }
&=\sum_{k=1}^{(\tau_1(nt)-1)\wedge \lfloor nt\rfloor} \!\!\!\!\! \1_{\{\xi_k+\eta_k>0,I_k=i\} }\\
&\hphantom{=~}+\sum_{k=\tau_1(nt)\wedge (\lfloor nt\rfloor+1)}^{\tau_2(nt)\wedge \lfloor nt\rfloor} \!\!\!\!\!\!  \1_{\{X_k+Y_k>0,I_k=i\} }
+\sum_{k=\tau_2(nt)\wedge \lfloor nt\rfloor+1}^{\lfloor nt\rfloor} \!\!\!\!\! \1_{\{\theta_k>0,I_k=i\}}.
\end{align*}
The second summand above is bounded by $\tau_2(nt)-\tau_1(nt)+1$
and thus the supremum over $t$ in a compact interval divided by $\sqrt{c_n}=O(\sqrt{n})$
converges to zero in probability by Lemma \ref{Lem:tau_2-tau_1->0 uniformly in probab} as $n \to \infty$.
The behavior of the second and the third summand strongly depends on whether
$\lim_{n \to \infty} \frac{c_n}{n}$ is smaller, larger or equal to $\tfrac{1}{m^\eta}$.

Given $0<a<b$ and $i=1,2$ we put
\begin{equation*}
A^{a,b,i}_n\defeq\{\tau_i(nt)\leq \lfloor nt\rfloor	\text{ for all } t\in[a,b]\}
\quad	\text{and}	\quad
B^{a,b,i}_n\defeq\{\tau_i(nt)\geq \lfloor nt\rfloor+1	\text{ for all }t\in[a,b]\}.
\end{equation*}

\noindent
We first deal with the case $c_{\infty}\in (0,\tfrac{1}{m^\eta})$.
In this case, we necessarily have $\rho=1$.
By Eq.~\eqref{eq:tau_1_convergence}, Lemma \ref{Lem:tau_2-tau_1->0 uniformly in probab}
and the uniform convergence theorem for regularly varying functions we obtain for $i=1,2$ and arbitrary $0<a<b$
\begin{equation}\label{eq:uniform WLLN for tau_1}
\sup_{t\in [a,b]}\left|\frac{\tau_i(nt)}{c_n}-\frac{t}{m^{\eta}}\right|\Probto 0\quad \text{as } n \to \infty,
\end{equation}
and therefore
\begin{equation}	\label{eq:tau_1(nt) leq nt unif}
\lim_{n\to\infty }\Prob(A^{a,b,1}_n\cap A^{a,b,2}_n)=1.
\end{equation}
For $i=1,2$, put 
\begin{equation}\label{eq:l_n_i_hat_decompostion}
\widehat{L}_{\lfloor nt\rfloor}^{(i)}=\sum_{k=1}^{\tau_1(nt)-1}\1_{\{\xi_k+\eta_k>0,I_k=i\} }+\sum_{k=\tau_2(nt)}^{\lfloor nt\rfloor}\1_{\{\theta_k>0,I_k=i\}}.
\end{equation}
From what have proved above it is clear that it suffices to check \eqref{eq:joint FCLT small c_n}
with $L_{\lfloor nt\rfloor}^{(i)}$ replaced by $\widehat{L}_{\lfloor nt\rfloor}^{(i)}$, $i=1,2$.
For typographical reasons we shall write convergences of various components in separate formulas
keeping in mind that they actually converge jointly in view of \eqref{eq:joint _convergence_with_nu1}.
Firstly,
\begin{multline*}
\bigg(\frac{\sum_{k=1}^{\lfloor c_n t \rfloor-1}\1_{\{\xi_k+\eta_k>0,I_k=0\}}-c_n t p_0(1-p)}{\sqrt{c_n}},\frac{\sum_{k=1}^{\lfloor c_n t \rfloor-1}\1_{\{\xi_k+\eta_k>0,I_k=1\}}-c_n t p p_1)}{\sqrt{c_n}}\bigg)_{\!t \geq 0}	\\
\Longrightarrow (B_1(t),B_2(t))_{t\geq 0}	\quad	\text{as } n\to\infty,
\end{multline*}
and therefore using Fact 2 (continuity of the composition mapping),
the continuous mapping theorem and \eqref{eq:uniform WLLN for tau_1}
\begin{multline*}
\bigg(\frac{\sum_{k=1}^{\tau_1(nt)-1}\1_{\{\xi_k+\eta_k>0,I_k=0\}} - \tau_1(nt) p_0(1\!-\!p)}{\sqrt{c_n}},
\frac{\sum_{k=1}^{\tau_1(nt)-1}\1_{\{\xi_k+\eta_k>0,I_k=1\}}- \tau_1(nt) p p_1)}{\sqrt{c_n}}\bigg)_{\! t>0}	\\
\Longrightarrow \Big(B_1\left(\tfrac{t}{m^{\eta}}\right),B_2\left(\tfrac{t}{m^{\eta}}\right)\Big)_{\! t>0}
\quad \text{ as } n\to\infty.
\end{multline*}
Secondly, using Fact 1 (continuity of addition)
and convergence of the last components in \eqref{eq:joint _convergence_with_nu1}
we deduce
\begin{multline}	\label{eq:case1_key_convergence_1}
\bigg(\frac{\sum_{k=1}^{\tau_1(nt)-1}\1_{\{\xi_k+\eta_k>0,I_k=0\}} - p_0(1\!-\!p)\tfrac{1}{m^\eta}c_{\lfloor nt\rfloor}}{\sqrt{c_n}},\frac{\sum_{k=1}^{\tau_1(nt)-1}\1_{\{\xi_k+\eta_k>0,I_k=1\}} -  p p_1\tfrac{1}{m^\eta}c_{\lfloor nt\rfloor})}{\sqrt{c_n}}\bigg)_{t> 0}\\
\Longrightarrow \Big(B_1\left(\tfrac{t}{m^{\eta}}\right)-p_0(1\!-\!p)(m^{\eta})^{3/2}B_5(t),B_2\left(\tfrac{t}{m^{\eta}}\right)-pp_1(m^{\eta})^{3/2}B_5(t)\Big)_{t > 0} \quad	\text{as } n\to\infty.
\end{multline}
In the same vein,
\begin{multline}\label{eq:case1_key_convergence_2}
\bigg(\frac{\sum_{k=1}^{\tau_2(nt)-1}\1_{\{\theta_k>0,I_k=0\}} - p_\theta(1\!-\!p)\tfrac{1}{m^\eta}c_{\lfloor nt\rfloor}}{\sqrt{c_n}},\frac{\sum_{k=1}^{\tau_2(nt)-1}\1_{\{\theta_k>0,I_k=1\}} -  p p_\theta\tfrac{1}{m^\eta}c_{\lfloor nt\rfloor})}{\sqrt{c_n}}\bigg)_{t> 0}\\
\Longrightarrow \Big(B_6\left(\tfrac{t}{m^{\eta}}\right)-p_\theta(1\!-\!p)(m^{\eta})^{3/2}B_5(t),B_7\left(\tfrac{t}{m^{\eta}}\right)-pp_\theta(m^{\eta})^{3/2}B_5(t)\Big)_{t > 0}\quad \text{as } n \to \infty
\end{multline}
Replacing $\sqrt{c_n}$ in the denominators by $\sqrt{c_{\infty}n}$ and summing everything up we get
\begin{align}
\bigg(&\frac{L_{\lfloor nt\rfloor}^{(0)}-(1\!-\!p)\tfrac{1}{m^\eta}(p_0-p_\theta) c_{\lfloor nt\rfloor}-nt(1\!-\!p)p_\theta}{\sqrt{n}},\frac{L_{\lfloor nt\rfloor}^{(1)}	- p\tfrac{1}{m^\eta}(p_1-p_\theta)c_{\lfloor nt\rfloor}-nt pp_\theta}{\sqrt{n}}\bigg)_{t> 0}	\notag	\\
&\Longrightarrow
\Big(\sqrt{c_{\infty}}\left(B_1\left(\tfrac{t}{m^{\eta}}\right)-p_0(1\!-\!p)(m^{\eta})^{3/2}B_5(t)-B_6\left(\tfrac{t}{m^{\eta}}\right)+p_\theta(1\!-\!p)(m^{\eta})^{3/2}B_5(t)\right)+B_6(t),	\notag	\\
&\hphantom{\Longrightarrow\Big(\ }
\sqrt{c_{\infty}}\left(B_2\left(\tfrac{t}{m^{\eta}}\right)-pp_1(m^{\eta})^{3/2}B_5(t)-B_7\left(\tfrac{t}{m^{\eta}}\right)+pp_\theta(m^{\eta})^{3/2}B_5(t))\Big)+B_7(t)\right)_{t > 0}\label{eq:case1_key_convergence_3}
\end{align}
as $n \to \infty$.
It remains to note that \eqref{eq:case1_key_convergence_3} holds jointly with
\begin{equation*}
\Bigg(\frac{N^{(0)}_{\lfloor nt\rfloor}-(1\!-\!p)nt}{\sqrt{n}},\frac{N^{(1)}_{\lfloor nt\rfloor}-pnt}{\sqrt{n}}\Bigg)_{t \geq 0}
\Longrightarrow (-B_3(t),B_3(t))_{t \geq 0},
\end{equation*}
and together this is \eqref{eq:joint FCLT small c_n}.

\vspace{0.5cm}
\noindent
{\sc Case $c_{\infty}=0$.} In this case \eqref{eq:tau_1(nt) leq nt unif} still holds but there are significant simplifications. First of all note that in this case we can have $\rho\leq 1$ and thus in \eqref{eq:uniform WLLN for tau_1} $\tfrac{1}{m^\eta}t$ must be replaced by $\tfrac{1}{m^\eta}t^{\rho}$. Further, in \eqref{eq:case1_key_convergence_1} and \eqref{eq:case1_key_convergence_2} upon replacing $\sqrt{c_n}$ in the denominator by $\sqrt{n}$ the limit becomes identical zero and, thus we have the same convergence \eqref{eq:case1_key_convergence_3} but with $c_{\infty}=0$ on the right-hand side.	\smallskip

\noindent
Finally, we deal with the case $c_{\infty}>\tfrac{1}{m^\eta}$.
In this case \eqref{eq:uniform WLLN for tau_1} implies
\begin{equation*}
\lim_{n\to\infty}\Prob(B^{a,b,1}_{n}\cap B^{a,b,1}_{n})=1.
\end{equation*}
Similar to \eqref{eq:l_n_i_hat_decompostion}, for $i=1,2$, we now put 
\begin{equation*}
\widehat{L}^{(i)}_{\lfloor nt \rfloor}\defeq\sum_{k=1}^{\lfloor nt \rfloor}\1_{\{\xi_k+\eta_k>0,I_k=i\}},
\end{equation*}
and note that now $L^{(i)}_{\lfloor nt \rfloor}$ can be replaced by  $\widehat{L}^{(i)}_{\lfloor nt \rfloor}$ in \eqref{eq:joint FCLT large c_n}.
After this replacement \eqref{eq:joint FCLT large c_n} is just a part of \eqref{eq:joint _convergence_with_nu1}.
\end{proof}

We now turn to the proof of Theorem \ref{Thm:cvgce chi^2 statistics}.
As a first step, we notice that Corollary \ref{Cor:cvgce chi^2 statistics}
follows from Theorem \ref{Thm:joint cvgce var in chi^2 statistics} and the continuous mapping theorem.
Theorem \ref{Thm:joint cvgce var in chi^2 statistics}, in turn, follows immediately
from Theorem \ref{Thm:joint CLT}. It thus remains to deduce Theorem \ref{Thm:cvgce chi^2 statistics}
from Theorem \ref{Thm:joint cvgce var in chi^2 statistics}.

\begin{proof}[Proof of Theorem \ref{Thm:cvgce chi^2 statistics}]
Our aim is to show how to calculate the distribution of the variable 
\begin{align*}
\chi_{\infty}^2& \defeq \frac{(d_2+G_2-p_\theta G_1-(1\!-\!p)(d_2\!+\!d_3\!+\!G_2\!+\!G_3))^2}{(1-p_\theta)p_\theta(1-p)}
+\frac{(d_3+G_3+p_\theta G_1-p(d_2\!+\!d_3\!+\!G_2\!+\!G_3))^2}{(1-p_\theta)p_\theta p}\\
&=\frac{(pd_2 -(1\!-\!p)d_3-p_\theta G_1+p G_2 - (1\!-\!p)G_3)^2}{(1-p_\theta)p_\theta(1-p)}
+\frac{((1\!-\!p)d_3-pd_2+p_\theta G_1-pG_2+(1\!-\!p)G_3)^2}{(1-p_\theta)p_\theta p},
\end{align*}
where ${\bf G}\defeq(G_1,G_2,G_3) $ is a centered Gaussian vector with covariance matrix ${\bf V_1}$. A simple calculation shows that
\begin{equation*}
\chi_{\infty}^2=\frac{(p_\theta G_1-pG_2+(1-p)G_3-(pd_2-(1-p)d_3))^2}{(1-p_\theta)p_\theta (1-p)p}.
\end{equation*}
Note that $p_\theta G_1-pG_2+(1-p)G_3$ has a centered normal distribution with the variance
\begin{align*}
\Var[p_\theta G_1-pG_2+(1-p)G_3]
&=p_{\theta}^2\Var[G_1]+p^2\Var[G_2] + (1-p)^2\Var[G_3]-2 p p_{\theta}\Cov[G_1,G_2]\\
&\hphantom{=} +2p_{\theta}(1-p)\Cov[G_1,G_3]-2p(1-p)\Cov[G_2,G_3]\\
&=p_\theta (1-p_\theta)p(1-p).
\end{align*}
Thus, for $\Normal$ having the standard normal distribution, we see that
\begin{equation*}
\chi_{\infty}^2\disteq \left(\Normal-\frac{pd_2-(1-p)d_3}{\sqrt{p p_\theta (1-p)(1-p_\theta)}}\right)^2=\left(\Normal-\frac{d_\infty(p_0-p_1)\sqrt{p(1-p)}}{m^{\eta}\sqrt{p_\theta(1-p_\theta)}}\right)^2.
\end{equation*}
The proof is complete.
\end{proof}

\section{Conclusions}

Starting from the observation that the standard for testing product changes on e-commerce websites is large scale hypothesis testing with statistical tests based on the assumption of independent samples such as the chi-squared test, we have suggested a new model for the samples which incorporates shared inventories.
This model introduces new dependencies.
Our main result is the calculation of the asymptotic law of the chi-squared test statistics under the new model assumptions in the critical regime where the number of items of a popular good is of the order of the square root of the sample size. Website versions that greedily sell the popular good have an initial advantage
in the number of sales of the order of the square root of the sample size, which is the order of the overall random fluctuations. Thus the initial advantage has an impact on the probability of rejecting the hypothesis.
We have demonstrated in examples that this may lead to both, arbitrarily high false-positive as well as arbitrarily high false-negative rates. This questions the assumption implicit in the industry standard that dependencies are small enough to be ignored. Moreover, it suggests that the present standard of A/B testing favors algorithms
that are designed to be good in competition against others, but not necessarily good when used on separate inventory.

Our work may be extended in the future in several directions.
On the one hand, our results may be used to construct tests for the model that keep the significance level.
On the other hand, the model may be extended to incorporate more features of real samples
such as a priori information about website visitors etc.

\section{Acknowledgements}

A.\,M.\ was supported by the Ulam programme funded by the Polish national agency for academic exchange (NAWA), project no. PPN/ULM/2019/1/00004/DEC/1. 
The authors would like to thank Tanja Matic and Onno Zoeter for many helpful discussions. We are particularly grateful to Onno Zoeter for communicating the essence of Example \ref{Exa:picky customers}.


\bibliographystyle{plain}
\bibliography{Test.bib} 


\end{document}